\documentclass[12pt]{amsart}
\numberwithin{equation}{section}
\usepackage{amsmath,amsthm,amsfonts,amscd,eucal,mathabx}

\usepackage{xypic}
\usepackage{graphicx}
\usepackage{amssymb}
\hfuzz12pt \vfuzz12pt

\def\ca{{\mathcal A}}
\def\cb{{\mathcal B}}

\def\cd{{\mathcal D}}

\def\cf{{\mathcal F}}

\def\ch{{\mathcal H}}

\def\ck{{\mathcal K}}

\def\cs{{\mathcal S}}


\def\ga{{\mathfrak A}}
\def\gb{{\mathfrak B}}


\def\ba{{\mathbb A}}

\def\bc{{\mathbb C}}

\def\bm{{\mathbb M}}
\def\bn{{\mathbb N}}

\def\br{{\mathbb R}}
\def\bt{{\mathbb T}}

\def\bz{{\mathbb Z}}

\def\a{\alpha}
\def\b{\beta}
  \def\G{\Gamma}
\def\d{\delta}  \def\D{\Delta}
\def\eeps{\epsilon}
\def\eps{\varepsilon}

\def\l{\lambda} 
\def\k{\kappa}
\def\m{\mu}

\def\n{\nu}
\def\r{\rho}
\def\s{\sigma} \def\S{\Sigma}
\def\t{\tau}
\def\f{\varphi}  \def\F{\Phi}
\def\th{\theta} 
\def\om{\omega}

\def\id{\hbox{id}}
\def\ker{\hbox{Ker}}

\newtheorem{thm}{Theorem}[section]
\newtheorem{lem}[thm]{Lemma}

\newtheorem{prop}[thm]{Proposition}

\theoremstyle{definition}
\newtheorem{rem}[thm]{Remark}
\newtheorem{defin}[thm]{Definition}

\def\sign{\mathop{\rm sign}}

\newcommand{\ty}[1]{\mathop{\rm {#1}}}
\def\di{{\rm d}}

\def\idd{{1}\!\!{\rm I}}

\DeclareMathAlphabet{\mathpzc}{OT1}{pzc}{m}{it}

\begin{document}

\title[noncommutative torus]
{fourier analysis for type III representations of the noncommutative torus}
\author{Francesco Fidaleo}
\address{Francesco Fidaleo\\
Dipartimento di Matematica \\
Universit\`{a} di Roma Tor Vergata\\
Via della Ricerca Scientifica 1, Roma 00133, Italy} \email{{\tt
fidaleo@mat.uniroma2.it}}

\date{\today}

\keywords{Noncommutative Harmonic Analysis, Noncommutative Torus, Type III Representations, Noncommutative Measure Theory, Noncommutative Geometry, Modular Spectral Triples}
\subjclass[2010]{43A99, 46L36, 46L51, 46L65, 46L87, 58B34, 81R60}

\begin{abstract}
For the noncommutative 2-torus, we define and study Fourier transforms arising from representations of states with central supports in the bidual, exhibiting a possibly nontrivial modular structure ({\it i.e.} type $\ty{III}$ representations).

We then prove the associated noncommutative analogous of Riemann-Lebesgue Lemma and Hausdorff-Young Theorem. In addition, the $L^p$- convergence result of the Cesaro means ({\it i.e.} the Fejer theorem), and the Abel means reproducing the Poisson kernel are also established,
providing inversion formulae for the Fourier transforms in $L^p$ spaces, $p\in[1,2]$.

Finally, in $L^2(M)$ we show how such Fourier transforms "diagonalise" appropriately some particular cases of modular Dirac operators, the latter being part of a one-parameter family of modular spectral triples naturally associated to the previously mentioned non type $\ty{II_1}$ representations.
\vskip0.1cm\noindent
\end{abstract}

\maketitle

\centerline{Dedicato a Maddalena Briamonte}

\section{introduction}
\label{sec1}

\noindent
The present paper is devoted to extend the Fourier analysis to non-type $\ty{II_1}$ representations of the noncommutative 2-torus, that is when the underlying "measure" on the noncommutative manifold under consideration is not the canonical trace. This would include also the type $\ty{II_1}$ case when the underlying measure relative to the trace is deformed by an inner, possibly unbounded, density. 

The argument of the present paper falls into the standard lines of investigation of operator algebras and noncommutative harmonic analysis, perhaps having many potential interactions with other branches of mathematics, and quantum physics.  Due to the crucial role played by the necessarily nontrivial modular data, such a topic can be viewed as a generalisation of the analysis developed in \cite{CXY} for the representation of the noncommutative torus associated to the tracial state.

For the noncommutative 2-torus $\ba_\a$, we have recently shown in \cite{FS} that it is possible to construct explicitly type $\ty{III}$ representations, at least when $\a$ is Liouville number. In addition, if $\a$ is one such number with a faster approximation property by rationals, it is also possible to construct modular spectral triples that, following \cite{CM}, are obtained by deforming the untwisted Dirac operator and the associated commutator by using the Tomita modular operator which is neither bounded nor inner.

As a quick introduction, we recall that the study of Connes' noncommutative geometry grew impetuously in the last decades in view of several potential applications to mathematics and physics.
The main ingredient is the so-called spectral triple, which is the candidate to encode the most important properties of the underlying ``noncommutative manifold''. For an exhaustive explanation of the fundamental role played by the spectral triples in noncommutative geometry, the reader is referred to the seminal monograph \cite{C}, the expository paper \cite{CPR}, and the literature cited therein.

Having in mind such possible applications, the seminal paper \cite{CM} also pursued the plane to exhibit twisted spectral triples for which the Dirac operator and the associated derivation are deformed by directly using the Tomita modular operator, provided the last is nontrivial. In that definition of twisted spectral triple, also the Tomita conjugation plays a role, which can be identified as a kind of "charge conjugation" well known in quantum physics. The twisted spectral triples considered in the above mentioned paper were then called modular. It is also clear that such modular spectral triples can play a role in constructing new noncommutative geometries in a type $\ty{III}$ setting, by emphasising
the need to take the modular data into account. 

One of the most studied examples in noncommutative geometry is indeed the noncommutative 2-torus $\ba_\a$ (see {\it e.g.} \cite{B}), since it is a quite simple model, though highly nontrivial. It is related to the discrete Canonical Commutation Relations, and can be considered as a quantum deformation of the classical $2$-torus 
$\bt^2$, according to the angle $2\pi\a$ entering in the definition of the symplectic form involved in the construction. When $\a$ is irrational, the $C^*$-algebra $\ba_\a$ is simple and admits a unique, necessarily faithful, trace $\t$. Such a trace represents the natural noncommutative generalisation of the Lebesgue measure on the undeformed 2-torus $\bt^2$. 
Therefore, the associated GNS representation $\pi_\t$ gives rise to a von Neumann factor $\pi_\t(\ba_\a)''$, isomorphic to the Murray-von Neumann hyperfinite type $\ty{II_1}$ factor, which can be viewed as the noncommutative counterpart of the von Neumann group algebra $L^\infty(\bt^2,m\times m)$, $m$ being the Haar measure ({\it i.e.} the normalised Lebesgue measure) on the unit circle. Hence, $\ba_\a$ is not a type $\ty{I}$ $C^*$-algebra for irrational $\a$, and therefore it must exhibit also type $\ty{II_\infty}$ and $\ty{III}$ representations.
The preliminary step to construct explicit non type $\ty{II_1}$ representations of the noncommutative torus, and consequently examples of modular spectral triples, was carried out in \cite{FS}.

In order to develop the Fourier analysis for these new models ({\it i.e.} when the underlying "measure" is not the canonical trace) associated to the noncommutative torus, we introduce and investigate two essentially different Fourier transforms naturally associated to the left and right embeddings, which are particular cases of the ones
$$
L^\infty\big(\pi(\ba_\a)''\big)\equiv\pi(\ba_\a)''\hookrightarrow\iota^\th_{\infty,1}\big(\pi(\ba_\a)''\big)\subset \pi(\ba_\a)''_*\equiv L^1\big(\pi(\ba_\a)''\big)
$$ 
given in \eqref{em10}, due to the non-triviality of the modular structure. Here, $\th=0,1$ correspond indeed to the left and right embeddings, respectively. Notice that our investigation appears new also 
for any representation of type $\ty{II_1}$ (including all examples treated in \cite{CM}) and $\ty{II_\infty}$, when the modular group is non trivial, even if "inner". 

Our construction can also be generalised to the noncommutative deformation $\ba_{\boldsymbol\a}$ of the $d$-dimensional torus $\bt^d$, ${\boldsymbol\a}$ being a real skew-symmetric $d\times d$-matrix ({\it e.g.} \cite{CXY}, Section 2.2), after constructing non type $\ty{II_1}$ representations for such models, following the method developed in \cite{FS}.

For such noncommutative examples of Fourier transforms, we prove the analogous of Riemann-Lebesgue Lemma and Hausdorff-Young Theorem. 
In addition, for $p\in[1,2]$ we
establish an inversion formula arising from the Cesaro mean, that is  a noncommutative version of the Fejer Theorem. Similar results can be obtained for the Abel average which is associated with the Poisson kernel, and also for the other cases mentioned in \cite{CXY}.

The last part of the present paper is devoted to a useful application of the Fourier analysis developed here, to new modular spectral triples for the noncommutative 2-torus. Firstly, for non type $\ty{II_1}$ cases, we extend the construction of deformed Dirac operators and corresponding modular spectral triples in \cite{FS} to a one-parameter family for $\eta\in[0,1]$, where the case $\eta=0$ is nothing but the one studied in the previous mentioned paper. Secondly, in $L^2(M)$ we show how such Fourier transforms "diagonalise" appropriately the particular cases of modular Dirac operators, corresponding to $\eta=0,1/2,1$, part of such a one-parameter family of modular spectral triples.

To conclude the introduction, we recall other interesting questions, such as those listed in \cite{CXY}, which we leave open in the present analysis, postponing them for a future possible investigation. Among them, we mention the construction and the study of the noncommutative Hardy spaces relative to the situations emerging from the present paper. Other important problems are those connected to the  
spaces of the so-called {\it Fourier multipliers}. Concerning the latter problem, we note that the analysis developed in \cite{CXY, Ri} cannot be directly carried out in the present situation, since the underlying states $\om$ are constructed only for very particular deformation angle $\a$, deeply depending on that angle, see \cite{M} and \cite{FS}, Section 3. Therefore, the deformation angle $\a$ cannot be merely considered as a parameter, and a more refined analysis should be used for the non type $\ty{II_1}$ cases in the sequel.

\section{preliminaries}
\label{sec2}

\subsection{Notations}

Let $E$ be a normed space. We simply denote by $\|\,{\bf\cdot}\,\|$ its norm whenever no confusion arises. In particular, $\|x\|\equiv\|x\|_\ch$ will be the Hilbertian norm of $x\in\ch$. For a continuous or measurable function $f$ defined on the locally compact space $X$ equipped with the Radon measure $\m$, $\|f\|\equiv\|f\|_\infty$ will denote the ``esssup'' norm (or the "sup" norm  for $f$ continuous) of the function $f$. 

In such a situation, $C_{\rm b}(X)$ will denote the $C^*$-algebra consisting of all bounded continuous functions defined on $X$, equipped with the natural algebraic operations, and norm 
$\|f\|\equiv\|f\|_\infty:=\sup_{x\in X}|f(x)|$. The situation of a point-set $X$, together with the $C^*$-algebra $\cb(X)=C_{\rm b}(X)$ of all bounded functions defined on $X$, is reduced to 
a particular case of the previous one by considering $X$ equipped with the discrete topology.

\vskip.3cm

Let $\bt:=\{z\in\bc\mid |z|=1\}$ be the abelian group consisting of the unit circle. The dual topological group $\widehat{\bt}$ is isomorphic to the discrete group $\bz$. The corresponding Haar measures are the normalised Lebesgue measure $\di m=\frac{\di z}{2\pi\imath z}=\frac{\di \th}{2\pi}$ for $z=e^{\imath\th}$ on the circle, and the counting measure on $\bz$, respectively. 

The Fourier transform and anti-transform of a bounded signed Radon measure $\m\in C(\bt)^*$ are respectively defined as
$$
\widehat{\m}(n):=\int_\bt z^{-n}\di\m(z)\,,\,\,\, \widecheck{\m}(n):=\int_\bt z^{n}\di\m(z),\quad n\in\bz.
$$
For $f\in L^1(\bt,m)$, its Fourier transform and anti-transform are usually defined as those of the measure $f\di m$:
$$
\widehat{f}(n):=\oint f(z)z^{-n}\frac{\di z}{2\pi\imath z},\,\,\,
\widecheck{f}(n):=\oint f(z)z^{n}\frac{\di z}{2\pi\imath z},\quad n\in\bz.
$$
The symbol $D$ denotes the derivative w.r.t. the natural argument $z\in\bt$, {\it i.e.} $D\equiv\frac{\di\,}{\di z}$, and if $z=e^{\imath\th}$ then $\frac{\di\,}{\di\th}=\imath z\frac{\di\,}{\di z}$. 
In particular, if $g\in C^1(\bt)$, 
$$
\frac{\di\,}{\di\th}\,g(e^{\imath\th})=\imath zDg(z).
$$

\vskip.3cm

Let $(X,\cb)$ be a measurable space, together with two $\s$-additive positive measures $\m$ and $\n$ on the $\s$-algebra $\cb$. If 
$\m$ dominates $\n$ in the sense of measures ({\it e.g.} \cite{R}, Section 6), we write $\n\preceq\m$. If $\n\preceq\m$ and $\m\preceq\n$, then $\m$ and $\n$ are equivalent as measures, and we write $\m\sim\n$.

We also write $X\sim Y$ whenever two sets $X$ and $Y$, equipped with the same algebraic structure, are algebraically isomorphic. For example, if $\ga$ and $\gb$ are involutive algebras (and in particular, $C^*$-algebras or $W^*$-algebras), 
we write $\ga\sim\gb$ if there exists a $*$-isomorphism $\r:\ga\to\gb$ of $\ga$ onto $\gb$.

\vskip.3cm

For a function $f$, $M_f$ denotes the multiplication operator acting on functions $g$ as $M_fg:=fg$. For example, if $f$ is a measurable function on the measure space $(X,\cb,\n)$, bounded almost everywhere, the multiplication operator $M_f$ is the closed operator acting on $L^2(X,\cb,\n)$ with domain
$$
\cd_{M_f}:=\bigg\{g\in L^2(X,\cb,\n)\mid \int_X\big|f(x)g(x)\big|^2\di\n(x)<+\infty\bigg\},
$$
defined for $g\in\cd_{M_f}$ as
$$
(M_fg)(x):=f(x)g(x),\quad x\in X.
$$

\vskip.3cm

If $\mathpzc{f}:X\to X$ is an invertible map on a point-set $X$:
\begin{itemize} 
\item[{\bf --}] $\mathpzc{f}^0:=\id_X$, and its inverse is denoted by $\mathpzc{f}^{-1}$;
\item[{\bf --}] for the $n$-times composition, $\mathpzc{f}^n:=\underbrace{\mathpzc{f}\circ\cdots\circ \mathpzc{f}}_{n-\text{times}}$;
\item[{\bf --}] for the $n$-times composition of the inverse, $\mathpzc{f}^{-n}:=\underbrace{\mathpzc{f}^{-1}\circ\cdots\circ \mathpzc{f}^{-1}}_{n-\text{times}}$.
\end{itemize}
Therefore, $\mathpzc{f}^{n}:X\to X$ is meaningful for any $n\in\bz$ with the above convention, and provides an action of the group $\bz$ on the set $X$. 

\subsection{Modular spectral triples}

Concerning the usual terminology, the main concepts and results in operator algebras theory such as the Tomita modular theory and so forth, the reader is referred to \cite{BR, St, T} and the reference cited therein. 

Let $\ga$ be a unital $C^*$-algebra. Denote by 
$\cs(\ga)\subset\ga^*_+$ the set of the states, that is the positive and normalised functionals on the $C^*$-algebra $\ga$. 

Let $\om\in\cs(\ga)$ with $s(\om)\in Z(\ga^{**})$. For such states with central support in the bidual, it can be shown ({\it e.g.} \cite{NSZ}) that the cyclic vector $\xi_\om\in\ch_\om$ is separating for $\pi_\om(\ga)''$, $(\ch_\om,\pi_\om,\xi_\om)$ being the GNS representation of $\om$. Therefore,
$$
\ch_\om\ni\pi_\om(a)\xi_\om\mapsto\pi_\om(a^*)\xi_\om\in\ch_\om
$$ 
is well defined on the dense subset $\big\{\pi_\om(a)\xi_\om\mid a\in\ga\big\}\subset\ch_\om$, and closable. The polar decomposition of its closure $S_\om$, named the Tomita involution, is usually written as $S_\om=J_\om\D_\om^{1/2}$, where $J_\om$ and $\D_\om$ are the Tomita conjugation and modular operator, respectively. 
We also report the useful relation ({\it cf.}  \cite{St}, Section 2.12)
\begin{equation}
\label{tomis}
J_\om f(\D_\om)J_\om=\bar f(\D^{-1}_\om),
\end{equation}
holding for each Borel function $f$ on $\br_+$.

Here, we provide the definition of (even) modular spectral triple we will use in the sequel, which is slightly different from the analogous one in \cite{CM} and generalise Definition 2.4 in \cite{FS}. We refer the reader to \cite{C} and the references cited therein, for general aspects and some natural applications of spectral triples.
\begin{defin}
\label{fmst}
For each $\eta\in[0,1]$, a {\it modular spectral triple} associated to a unital $C^*$-algebra $\ga$ is a triplet $(\om,\ca,L)_\eta$, where $\om\in\cs(\ga)$, $\ca\subset\ga$ is a dense $*$-algebra and $L$ is a densely defined closed operator acting on $\ch_\om$ satisfying the following conditions:
\begin{itemize}
\item[(i)] $s(\om)\in Z(\ga^{**})$;
\item[(ii)] the deformed Dirac operator 
$$
D_L^{(\eta)}:=\begin{pmatrix} 
	 0 &\D_\om^{\eta-1}L\D_\om^{-\eta}\\
	\D_\om^{-\eta}L^*\D_\om^{\eta-1}& 0\\
     \end{pmatrix} 
$$
acting on $\ch_\om\oplus\ch_\om$, uniquely defines a selfadjoint operator with compact resolvent: $D_L^{(\eta)}$ is densely defined essentially selfadjoint 
with $\left(1+\left(\overline{D_L^{(\eta)}}\right)^2\right)^{-1/2}$ compact;
\item[(iii)] for each $a\in\ca$, the deformed commutator
$$
\cd_L^{(\eta)}\big(\pi_\om(a)\big):=\imath\begin{pmatrix} 
	0&\D_\om^{\eta-1}[L,\pi_\om(a)]\D_\om^{-\eta}\\
	\D_\om^{-\eta}[L^*,\pi_\om(a)]\D_\om^{\eta-1}& 0\\
     \end{pmatrix}
$$
uniquely defines a bounded operator: $\overline{\cd_{\cd_L^{(\eta)}\left(\pi_\om(a)\right)}}=\ch_\om\oplus\ch_\om$ and 
$$
\sup\big\{\big\|\cd_L^{(\eta)}\big(\pi_\om(a)\big)\xi\big\|\mid \xi\in\cd_{\cd_L^{(\eta)}\left(\pi_\om(a)\right)},\,\,\|\xi\|\leq1\big\}<+\infty;
$$
\item[(iv)] $\pi_\om(\ca)\cd_L\subset \cd_L$, $\pi_\om(\ca)\cd_{L^*}\subset \cd_{L^*}$.
\end{itemize}
\end{defin}
\vskip.5cm
The closure of the Dirac operator in (ii) will be also denoted as $D_L^{(\eta)}$ with an abuse of notation.

Notice that, similarly to the untwisted case in order to study the possibly associated Fredholm module (see {\it e.g.} \cite{FMR}),
condition (iv) above is included only to make the commutators $[L,\pi_\om(a)]$, $[L^*,\pi_\om(a)]$ appearing in (ii) directly meaningful.

\subsection{The noncommutative 2-torus}

We fix the noncommutative 2-torus based on the deformation of the classical 2-torus $\bt^2$, corresponding to the rotation of the angle $4\pi\a$. The multiplicative factor 2 is introduced only for a pure matter of convenience explained in \cite{FS}. 
Indeed, for a fixed $\a\in\br$, the {\it noncommutative torus} $\ba_{2\a}$ associated with the rotation by the angle $4\pi\a$, is the universal $C^*$-algebra with identity $I$ generated by the commutation relations involving two noncommutative unitary indeterminates $U,V$:
\begin{equation}
\label{ccrba}
\begin{split}
&UU^*=U^*U=I=VV^*=V^*V,\\
&UV=e^{4\pi\imath\a}VU.
\end{split}
\end{equation}
From now on, without loosing generality ({\it cf.} \cite{B}) we also assume that $\a\in(0,1/2)$. 

We express $\ba_{2\a}$ in the so-called {\it Weyl form}. Let ${\bf a}:=(m,n) \in \bz^2$ be a double sequence of integers, and define
\begin{equation}
\label{wcea}
W({\bf a}):=e^{-2\pi\imath\a mn}U^mV^n,\quad {\bf a}\in\bz^2.
\end{equation}
Obviously, $W({\bf 0})=I=\idd_{\ba_{2\a}}$, and the commutation relations \eqref{ccrba} become
\begin{align}
\label{ccrba1}
\begin{split}
&W({\bf a})W({\bf A})=W({\bf a}+{\bf A})e^{2\pi\imath\a\s({\bf a},{\bf A})},\\
&W({\bf a})^*=W(-{\bf a}),\quad {\bf a}, {\bf A}\in\bz^2,
\end{split}
\end{align}
where the symplectic form $\s$ is defined as
$$
\s({\bf a},{\bf A}):=(mN-Mn),\quad {\bf a}=(m,n), \,{\bf A}=(M,N) \in\bz^2.
$$
We now fix a function $f\in\cb(\bz^2)$, which we may assume to have finite support. The element $W(f)\in\ba_{2\a}$ is then defined as
$$
W(f):=\sum_{{\bf a}\in\bz^2}f({\bf a})W({\bf a}).
$$
The set 
$$
\{W(f)\mid f\,\text{is a function on $\bz^2$ with finite support}\}
$$ 
provides a dense $*$-algebra of $\ba_{2\a}$. Indeed, the relations \eqref{ccrba1} are transferred on the generators $W(f)$ as follows:
$$
W(f)^*=W(f^\star),\quad W(f)W(g)=W(f\star g),
$$
where
\begin{align*}
&f^\star({\bf a}):=\overline{f(-{\bf a})},\\
&(f\star g)({\bf a})=\sum_{{\bf A}\in\bz^2}f({\bf A})g({\bf a}-{\bf A})e^{-2\pi\imath\a\s({\bf a},{\bf A})}.
\end{align*}
The deformed convolution above depends on the chosen number $\a$. Since such a number is fixed in the sequel, we simply denote that with the $\star$ symbol.

For irrational numbers $\a$, we also recall that $\ba_{2\a}$ is simple and has a necessarily unique and faithful trace
$$
\t(W(f)):=f({\bf 0}),\quad W(f)\in\ba_{2\a}.
$$
In this situation, by \cite{B}, Remark 1.7, any element $A\in\ba_{2\a}$ is uniquely determined by its {\it Fourier coefficients}
\begin{equation}
\label{fzocf}
f({\bf a}):=\t(W(-{\bf a})A),\quad {\bf a}\in\bz^2.
\end{equation}
We note that, for each fixed $n\in\bz$, $f^{(n)}(m):=f(m,n)$ defines a sequence whose Fourier anti-transforms 
$$
\widecheck{f^{(n)}}(z):=\sum_{m\in\bz}f(m,n) z^m,\quad z\in\bt,
$$
provides a sequence of continuous functions $\left\{\widecheck{f^{(n)}}\mid n\in\bz\right\}\subset C(\bt)$.

In order to obtain explicit type $\ty{III}$ representations of the noncommutative 2-torus, we tacitly suppose that the deformation angle $\a$ is irrational, satisfying sufficiently fast approximation properties by rationals, see below.

\subsection{Diffeomorphisms of the unit circle}
\label{2dixf}

The Liouville numbers, denoted here as {\bf L}-numbers, are necessarily irrational, and satisfy by definition the fast approximation by rationals as follows.
\begin{itemize}
\item[{\bf (L)}] A {\it Liouville number} $\a\in(0,1)$ is a real number such that, for each $N\in\bn$ the inequality 
\begin{equation*}
\left|\a - \frac{p}{q}\right|< \frac{1}{q^N}
\end{equation*}
has an infinite number of solutions for $p,q\in\bn$ with ${\rm gcd}(p,q)=1$.
\end{itemize}
We also consider numbers satisfying the following faster approximation by rationals, and denoted as  {\bf UL}-numbers.
\begin{itemize}
\item[{\bf (UL)}] A  {\it Ultra-Liouville number} $\a\in(0,1)$ is a real number such that, for each $\l>1$ and $N\in\bn$, the inequality
\begin{equation*}
\left|\a - \frac{p}{q}\right|<\frac{1}{\l^{q^N}}
\end{equation*}
again admits an infinite number of solutions for $p,q\in\bn$ with  ${\rm gcd}(p,q)=1$.
\end{itemize}
We refer the reader to \cite{Ki, S} for properties and details on such Liouville numbers. 

In order to construct states on $\ba_{2\a}$ with a nontrivial modular structure, and then type $\ty{III}$ representations with associated nontrivial modular spectral triples, we consider an orientation-preserving $C^\infty$-diffeo\-morphism (called simply "a diffeomorphism") $\mathpzc{f}\in C^\infty(\bt)$ with the rotation number ({\it e.g.} \cite{KH}) $\r(\mathpzc{f})=2\a$. By the theorem of Denjoy, $\mathpzc{f}$ is conjugate to the rotation $R_{2\a}$ of the angle $4\pi\a$ through an uniquely determined homeomorphism 
$\mathpzc{h}_\mathpzc{f}$ of the unit circle satisfying $\mathpzc{h}_\mathpzc{f}(1)=1$, and
\begin{equation}
\label{dehcm}
\mathpzc{f}=\mathpzc{h}_\mathpzc{f}\circ R_{2\a}\circ\mathpzc{h}_\mathpzc{f}^{-1}.
\end{equation}
In such a situation, $T$ is defined as the "square root" of $\mathpzc{f}$:
\begin{equation}
\label{sqfzc}
T:=\mathpzc{h}_\mathpzc{f}\circ R_{\a}\circ\mathpzc{h}_\mathpzc{f}^{-1}.
\end{equation}
We note that
$$
\m_\mathpzc{f}:=(\mathpzc{h}_\mathpzc{f})^*m=m\circ \mathpzc{h}_\mathpzc{f}^{-1},
$$
is the unique invariant measure, which is also ergodic, for the natural action of $\mathpzc{f}$ on $\bt$. For a Diophantine number $\a$, it can be shown that $\mathpzc{h}_\mathpzc{f}$ is indeed smooth, and thus $\m_\mathpzc{f}\sim m$, the last denoting the normalised Haar measure on the circle.

For a Liouville number $\a$, things are quite different. In fact, there are diffeomorphisms as above for which the unique invariant measure $(\mathpzc{h}_\mathpzc{f})^*m$ is singular w.r.t. the Haar measure $m$: $(\mathpzc{h}_\mathpzc{f})^*m\perp m$. 

Summarising, if $\a$ is Diophantine the construction in \cite{FS} produces representations which are equivalent to the tracial one, leading to type $\ty{II_1}$ von Neumann factors. Conversely, if $\a$ is a {\bf L}-number, in \cite{M} diffeomorphisms as above were constructed with the prescription that
\begin{equation*}
\pi_\om(\ba_{2\a})''\sim L^\infty(\bt,\di\th/2\pi)\ltimes_{\b}\bz
\end{equation*}
is a type $\ty{II_\infty}$, or $\ty{III_\l}$ von Neumann factor for each $\l\in[0,1]$. Here, $\b$ is the action induced on functions $g$ on the unit circle by the corresponding action of $\mathpzc{f}$ on points: $\b(g)=g\circ\mathpzc{f}^{-1}$.
If in addition $\a$ is a {\bf UL}-number, then it is also possible to explicitly construct nontrivial modular spectral triples for $\ba_{2\a}$, which fulfil the prescribed properties. The reader is referred to \cite{C, CM} for the significance of the concept of spectral triples and the modular ones in noncommutative geometry, and ti \cite{FS} for the construction of nontrivial 
({\it i.e.} non type $\ty{II_1}$) modular spectral triples.

Another natural object associated to $C^1$-diffeomorphisms of the unit circle which plays a crucial role in constructing modular spectral triples, is the so-called {\it growth sequence}
$\{\G_n(\mathpzc{f})\mid n\in\bn\}$, defined as
$$
\G_n(\mathpzc{f}):=\max\{\|D\mathpzc{f}^n\|_\infty,\|D\mathpzc{f}^{-n}\|_\infty\},\quad n\in\bn.
$$

For smooth diffeomorphisms $\mathpzc{f}$ of the circle conjugate to an irrational rotation, it can be shown that $\G_n(\mathpzc{f})$ cannot have a too wild behaviour at infinity, see \cite{W}. If $\a$ is Diophantine, then $\G_n(\mathpzc{f})$ is bounded because $\mathpzc{h}_\mathpzc{f}$ in \eqref{dehcm} is smooth. For diffeomorphisms constructed in \cite{M} for a {\bf L}-number $\a$, it is shown in \cite{FS} that $\G_n(\mathpzc{f})=o(n)$, and  
$\G_n(\mathpzc{f})=o(\ln n)$ if in addition $\a$ is a {\bf UL}-number.

Consider $g\in\cb(\bz^2)$ such that $W(g)\in\ba_{2\a}$. For $k\in\bn$, $l=0,1$, define the following sequence of seminorms
\begin{equation*}
\r_{k,l}\big(W(g)\big):=\sup_{n\in\bz}\left\{(|n|+1)^k\left\|D^l\left(\widecheck{g^{(n)}}\circ R^{-n}\circ \mathpzc{h}_{\mathpzc{f}}^{-1}\right)\right\|_\infty\right\},
\end{equation*}
provided $\widecheck{g^{(n)}}\circ R^{-n}\circ \mathpzc{h}_{\mathpzc{f}}^{-1}\in C^1(\bt)$, $n\in\bz$. Define
$$
\ba^{o}_{2\a}:=\bigg\{W(g)\mid\r_{k,l}\big(W(g)\big)<+\infty,\,\, k\in\bn,\, l=0,1\bigg\},
$$
which was shown in \cite{FS}, Section 11, to be a unital $*$-subalgebra of $\ba_{2\a}$, stable under the entire functional calculus.

\subsection{States on the noncommutative 2-torus}

As explained before, for $\a\in(0,1/2)$ being an irrational number, fix a diffeomorphism $\mathpzc{f}\in C^\infty(\bt)$ which we suppose to be always $C^\infty$ without any specific mention, satisfying \eqref{dehcm}. As previously pointed out, if $\a$ is a {\bf L}-number or a {\bf UL}-number, it will be possible to construct among others, type $\ty{III}$ representations and in addition nontrivial modular spectral triples, respectively.

The starting point is to consider the unique homeomorphism $\mathpzc{h}_\mathpzc{f}$ satisfying \eqref{dehcm}, together with the measure $\m:=m\circ\mathpzc{h}_\mathpzc{f}$. Since 
\begin{equation}
\label{ommu1}
\m\sim\m\circ R_{2\a}^{n}\,,\quad n\in\bz,
\end{equation}
it is shown in \cite{FS} that the state $\om\equiv\om_\m$ on $\ba_{2\a}$ defined as
\begin{equation*}
\om(W(f)):=\sum_{m\in\bz}\widecheck{\m}(m)f(m,0)
\end{equation*}
has central support in the bidual $\ba_{2\a}^{**}$: $s(\om)\in Z(\ba_{2\a}^{**})$. The GNS representation $(\ch_\om,\pi_\om,\xi_\om)$ associated to $\om$ is given by
\begin{equation*}
\begin{split}
\ch_{\om}&=\ell^2\big(\bz;L^2(\bt, m)\big),\\
(\pi_{\om}(W(f))g)_n(z)&=\sum_{l\in\bz}\left(\widecheck{f^{(l)}}\circ \mathpzc{h}^{-1}_{T^2}\circ T^{2n-l}\right)(z)g_{n-l}(z),\\
(\xi_{\om})_n(z)&=\d_{n,0},\quad z\in\bt,\,\,n\in\bz,
\end{split}
\end{equation*}
where $T$ is a "square root" of $\mathpzc{f}$ given in \eqref{sqfzc}.

By \eqref{ommu1}, we have 
$$
m\sim m\circ \mathpzc{f}^{n},\quad n\in\bz
$$
(which of course holds true even for each $C^1$-diffeomorphism),
with Radon-Nikodym derivative
\begin{equation}
\label{rcanz}
\d_n(z):=\frac{\di m\circ\mathpzc{f}^{n}}{\di m}(z)=\frac{z(D\mathpzc{f}^{n})(z)}{\mathpzc{f}^{n}(z)},\quad z\in\bt,\,n\in\bz.
\end{equation}
The modular structure is then expressed as follows. With $a\in\br$, on 
$$
\cd_{\D^{a/2}_\om}:=\bigg\{x\in\ch_\om\mid \sum_{n\in\bz}\int_\bt\d_n(z)^a|x_n(z)|^2\di m(z)<+\infty\bigg\}
$$
we have for the modular operator:
\begin{equation*}
(\D^{a/2}_\om x)_n(z)=\d_n(z)^{a/2}x_n(z),\quad n\in\bz,\,\,z\in\bt.
\end{equation*}
Concerning the modular conjugation, we get
$$
(J_\om x)_n(z)=\d_n(z)^{1/2}\overline{x_{-n}(\mathpzc{f}^n(z))},\quad n\in\bz,\,\,z\in\bt,
$$
which is meaningful for each $x\in\ch_\om$.

\subsection{Operator spaces}
\label{ops}

We consider a normed space $E$ equipped with  a sequence of norms on $\bm_n(E)$, the spaces of the $n\times n$ matrices with entries in $E$, satisfying for $x\in\bm_n(E)$, $y\in\bm_m(E)$, $a,b\in\bm_n$, $n, m=1,2,\dots$, the following properties:
\begin{itemize}
\item[(i)] $\|axb\|_{\bm_n(E)}\leq\|a\|_{\bm_n}\|x\|_{\bm_n(E)}\|b\|_{\bm_n}$,
\item[(ii)] $\|x\oplus y\|_{\bm_{n+m}(E)}=\max\{\|x\|_{\bm_{n}(E)},\|y\|_{\bm_{m}(E)}\}$.
\end{itemize}
Such a normed space, together with these sequence of norms, is said to be an (abstract) {\it operator space}. Conversely, for a subspace $E\subset\ga$ of a $C^*$-algebra $\ga$, the norms on $\bm_n(E)$ inherited by the inclusions $\bm_n(E)\subset\bm_n(\ga)$,
automatically satisfy (i) and (ii) above, and therefore $E$ is a (concrete) operator space in a natural way. It is proved in \cite{Ru} that a normed space $E$, equipped with a sequence of norms on all $\bm_n(E)$, can be faithfully represented as a subspace of a $C^*$-algebra if and only if these norms satisfy (i) and (ii) above.

Let $E$ be an operator space. The topological dual $E^*$ can be viewed also as an operator space, when $\bm_n(E^*)$ are equipped with the norms given for $f\in\bm_n(E^*)$ by
$$
\|f\|_{\bm_n(E^*)}:=\sup\left\{\|f(a)\|_{\bm_{nm}}\mid \|a\|_{\bm_m(E)}\leq1, m\in\bn\right\}.
$$
Here, with $a\in \bm_m(E)$, the matrix $f(a)$ is defined as
$$
f(a)_{(i,j)(k,l)}:=f_{ik}(a_{jl}),\quad i,k=1,\dots,n,\,\,j,l=1,\dots,m.
$$
Such an operator space $E^*$ is called in \cite{Bl} {\it the standard dual} of $E$.

Let $E,F$ be operator spaces. A  linear map $T:E\to F$ is said to be {\it completely bounded} if for $T\otimes\id_{\bm_n}:\bm_n(E)\to\bm_n(F)$ we have
$$
\|T\|_{cb}:=\sup_n\big\|T\otimes\id_{\bm_n}\big\|<+\infty.
$$
Examples of linear maps which are bounded but not completely bounded, can be easily exhibited, see {\it e.g.} Section \ref{sec2nclp}.

\section{on noncommutative $L^p$-spaces}
\label{sec2nclp}

\noindent

We fix a normal faithful state $\om$ on the $W^*$-algebra $M$. The $*$-algebra of {\it analytic elements} for the action of the modular group, that is that formed by all elements for which
$\br\ni t\mapsto \s^\om_{t}(a)\in M$
has a analytic continuation to an entire function $\bc\ni z\mapsto\s^\om_{z}(a)\in M$ (see {\it e.g.} \cite{St}) is denoted by $M_\infty$. 

For $\th\in[0,1]$, consider the noncommutative $L^p$-spaces $L^p_\th(M,\om)=L^p_\th(M)$, as $\om$ is kept fixed. These can be constructed by complex interpolation by considering various embeddings $M\hookrightarrow M_*$, one for each $\th$, see \cite{F, K, Te}. Here, $\th=0,1$ correspond to the left and right embeddings
$$
M\ni x\mapsto\left\{\begin{array}{ll}
                      \!\!\!\!\!\!&\om(\,{\bf\cdot}\,x)\\
                     \!\!\!\!\!\!&\om(x\,{\bf\cdot}\,)
                    \end{array}
                    \right.\!\in M_*,
$$
respectively. We also mention another equivalent way of approaching $L^p$-spaces developed in \cite{Ha}.

The key-point for the construction of noncommutative $L^p$-spaces is the modular theory ({\it e.g.} \cite{St}), and complex interpolation ({\it e.g.} \cite{BL}). In fact, for $x\in M$ the map
$$
\br\ni t\mapsto \s^\om_t(x)\om=\om\big(\,{\bf\cdot}\, \s^\om_t(x)\big)\in M_*
$$ 
extends to a bounded and continuous map on the strip $\{z\in\bc\mid-1\leq {\rm Im}(z)\leq 0\}$, analytic in the interior. After putting $L^1_\th(M):=M_*$ for each $\th\in[0,1]$,  $L^\infty_\th(M):=\iota^\th_{\infty,1}(M)\sim M$ with the norm inherited by that of $M$, and checking that the complex interpolation functor $C^\th$ based on such analytic functions coincides (equal norms) with the standard one $C_\th$ (see Theorem 1.5 in \cite{K}), it is possible to define 
$$
L^p_\th(M):=C^{1/p}\big(\iota^\th_{\infty,1}(M),M_*\big)=C_{1/p}\big(\iota^\th_{\infty,1}(M),M_*\big),\quad 1<p<+\infty.
$$
In such a way, for $1\leq p\leq q\leq+\infty$ there are contractive embeddings
$$
\iota^\th_{q,p}:L^q_\th(M)\hookrightarrow L^p_\th(M).
$$
We also remark ({\it cf.} \cite{F}) that one can equip the $ L^p$-spaces with a natural operator space structure arising from the canonical embeddings 
$$
\iota^\th_{\infty,1}\otimes\id:\bm_n(M)\to\bm_n(M_*).
$$
As stated in Proposition 3 in \cite{F}, $\iota^\th_{\infty,1}$ is indeed completely bounded. For the convenience of the reader, we provide some details about the proof of such a result omitted in the original one.
\begin{prop}
\label{ohcb}
For each $\th\in[0,1]$, the embedding 
\begin{equation}
\label{em10}
M\ni x\mapsto\iota^\th_{\infty,1}(x)=\om\big(\,{\bf\cdot}\, \s^\om_{-\imath\th}(x)\big)\in M_*
\end{equation}
is a complete contraction.
\end{prop}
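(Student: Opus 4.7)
The strategy is to interpolate between the two endpoint embeddings via a Hadamard three-lines argument carried out at the operator-space level. At $\th=0$, $\iota^0_{\infty,1}(x)=\om(\,\cdot\, x)$ is the standard left embedding $M\hookrightarrow M_*$; at $\th=1$, the KMS identity $\om\bigl(y\,\s^\om_{-\imath}(x)\bigr)=\om(xy)$ yields $\iota^1_{\infty,1}(x)=\om(x\,\cdot\,)$, the right embedding. Both are complete contractions: at the matrix level, for $x=[x_{ij}]\in\bm_n(M)$ and $y=[y_{kl}]\in\bm_m(M)$, the estimate $\|[\om(y_{kl}x_{ij})]\|_{\bm_{nm}}\le\|yx\|_{\bm_{nm}(M)}\le\|y\|\,\|x\|$ delivers complete contractivity of $\iota^0_{\infty,1}$, and the symmetric inequality handles $\iota^1_{\infty,1}$.

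The core step is the interpolation itself. Fix $n\ge 1$ and $x=[x_{ij}]\in\bm_n(M_\infty)$ of norm $\le 1$. Since the entries are entire for $\s^\om$, the $\bm_n(M_*)$-valued function
$$
F(z):=\bigl[\om\bigl(\,\cdot\,\s^\om_{-\imath z}(x_{ij})\bigr)\bigr],\qquad 0\le\ty{Re}(z)\le 1,
$$
is analytic in the open strip and bounded continuous on its closure. On $\ty{Re}(z)=0$ one has $\s^\om_{-\imath(\imath t)}(x_{ij})=\s^\om_t(x_{ij})$, and $\s^\om_t\otimes\id_{\bm_n}$ is an isometric automorphism of $\bm_n(M)$; hence $F(\imath t)$ is the left embedding of a matrix of norm $\le 1$ and $\|F(\imath t)\|_{\bm_n(M_*)}\le 1$. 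On $\ty{Re}(z)=1$, the KMS identity recasts $F(1+\imath t)$ as the right embedding of $[\s^\om_t(x_{ij})]$, again of norm $\le 1$. The Hadamard three-lines theorem for bounded Banach-valued analytic functions on the strip, applied in the space $\bm_n(M_*)$, now gives $\|F(\th)\|_{\bm_n(M_*)}\le 1$, that is, $\|(\iota^\th_{\infty,1}\otimes\id_{\bm_n})(x)\|_{\bm_n(M_*)}\le\|x\|_{\bm_n(M)}$.

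To finish, one extends the estimate from $\bm_n(M_\infty)$ to the whole of $\bm_n(M)$ via the bounded continuous prolongation on the strip recalled immediately before the statement, applied entry by entry, and then takes the supremum over $n$ to conclude $\|\iota^\th_{\infty,1}\|_{cb}\le 1$. The point that deserves care is that the three-lines estimate must be carried out at the matrix level rather than entrywise, but this creates no real difficulty: the analytic family $z\mapsto\s^\om_{-\imath z}$ acts on each entry identically, the endpoint embeddings are themselves complete contractions, and the vector-valued Hadamard theorem applies verbatim to $F$ valued in the Banach space $\bm_n(M_*)$. Consequently, the main obstacle is essentially bookkeeping; there is no genuinely new analytic ingredient beyond the modular-group analyticity that is already encoded in the definition of $M_\infty$.
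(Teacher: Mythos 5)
Your proof is correct and follows essentially the same route as the paper's: reduce to the endpoint embeddings $\th=0,1$ by a Phragm\'en--Lindel\"of / three-lines argument (which you spell out at the matrix level using analytic elements), and verify endpoint complete contractivity by recognising the relevant numerical matrix as $(\om\otimes\id_{\bm_{nm}})$ applied to a tensor product of the two matrices, up to an index permutation implemented by a unitary. The only blemish is the notation $\|yx\|_{\bm_{nm}(M)}$, which is not defined for $y\in\bm_m(M)$, $x\in\bm_n(M)$ and should read $\|y\otimes x\|_{\bm_{nm}(M)}$; the paper handles the attendant reindexing explicitly via a permutation unitary $V_n$, but this does not affect correctness.
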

\begin{proof}
By the Phragm\'en-Lindel\"of Theorem ({\it e.g.} \cite{R}, Theorem 12.8), complete boundedness for the left and right embeddings ({\it i.e.} for $\th=0,1$) implies complete boundedness for all 
$\th\in[0,1]$. Therefore, we reduce the matter to the left and right embeddings.

By considering the natural embedding $\bm_m(M)\ni y\mapsto y\oplus 0\in\bm_n(M)$ for $n\geq m$, we can restrict to the cases where $m=n$. Thus, in order to check the complete boundedness, for each $x,y\in\bm_n(M)$ and $i,j,k,l=1,\dots,n$ we should compute the operator norm of the numerical matrices $L,R\in\bm_{n^2}$ whose entries are given by
$$
L_{(ij),(kl)}:=\om(y_{jl}x_{ik}),\quad R_{(ij),(kl)}:=\om(x_{ik}y_{jl}).
$$
Concerning the second one, we easily get
$$
R=\big(\om\otimes\id_{\bm_{n^2}}\big)(x\otimes y),
$$
and therefore 
$$
\|R\|_{\bm_{n^2}}\leq\|x\|_{\bm_n(M)}\|y\|_{\bm_n(M)}.
$$
For the first one, we can straightforwardly see that there exist a permutation (depending on $n$) $\s_n$ of the set 
$$
(11),\dots, (1n),\dots,(n1),\dots, (nn)\sim 1,\dots,n^2,
$$
and therefore a selfadjoint unitary $V_n$ acting on $\bc^{n^2}$ such that 
$$
L=V_n\big(\om\otimes\id_{\bm_{n^2}}\big)(y\otimes x)V_n.
$$
For example, when $n=2$, 
$$
\s_2=\begin{pmatrix} 
	 1\,\,2\,\,3\,\,4\\
	1\,\,3\,\,2\,\,4\\
     \end{pmatrix},
$$
and for $n=3$, 
$$
\s_3=\begin{pmatrix} 
	 1\,\,2\,\,3\,\,4\,\,5\,\,6\,\,7\,\,8\,\,9\\
	1\,\,4\,\,7\,\,2\,\,5\,\,8\,\,3\,\,6\,\,9\\
     \end{pmatrix}.
$$
Consequently, again
$$
\|L\|_{\bm_{n^2}}\leq\|x\|_{\bm_n(M)}\|y\|_{\bm_n(M)}.
$$
\end{proof}
It is possible to show that such an operator space structure coincides with the Pisier OH-structure ({\it cf.} \cite{Pi})
at level of Hilbert spaces, that is 
$$
C_{1/2}\big(\iota^\th_{\infty,1}(M),M_*\big)\equiv L^2(M)={\rm OH}(\ch_\om),\quad \th\in[0,1],
$$
see  \cite{F}, Theorem 5. 

With such identifications, we have that the topological dual $L^p_\th(M)'$ is completely isomorphic to $L^q_{1-\th}(M)$ in a canonical way, see \cite{K, F}.

Such an operator space structure on the noncommutative $L^p$-spaces obtained by interpolation arising by the various embeddings of $M\equiv L^\infty(M)$ into $M_*\equiv L^1(M)$ as before, is called {\it canonical}.

To simplify the notations, we put for $p,q\in[1,+\infty]$ with $q\geq p$,
$$
i_{q,p}:=\iota_{q,p}^0,\quad j_{q,p}:=\iota_{p,q}^1.
$$
For $a\in M$, we denote with $L_a$ and $R_a$ its left and right embedding in $M_*$, that is
\begin{equation*}
L_a:=\iota_{\infty,1}^0(a)=i_{\infty,1}(a),\quad R_a:=\iota_{\infty,1}^1(a)=j_{\infty,1}(a).
\end{equation*}
We note that it is also customary to equip $M_*$ with the operator space structure arising from the various embeddings relative to $\th\in[0,1]$ of $M$ into the predual of the opposite algebra 
$(M^\circ)_*$, see {\it e.g.} Section 2.1 of \cite{CXY}. Denote by $\k^\th_{\infty,1}:M\hookrightarrow (M^\circ)_*$ such an embedding, and consider the canonical identification
$o:M_*\to (M^\circ)_*$
given by 
$$
o(\f)(x^\circ):=\f(x),\quad \f\in M_*,\,\,x^\circ\in M^\circ.
$$ 
Apart from other things, this choice means to exchange the left with the right embedding as follows: $\k^\th_{\infty,1}=o\circ\iota^{1-\th}_{\infty,1}$, see {\it e.g.} \cite{K}, Proposition 7.3, and   \cite{F}, Proposition 3. With this choice, the embeddings $\k^\th_{\infty,1}$ would not be completely bounded. To this aim, we prove the following, probably known to the experts, 
\begin{prop}
For the Hilbert space $\ch=\ell^2(\bn)$, the map $(o^t)^{-1}:\cb(\ch)\to\cb(\ch)^\circ$ is not completely bounded.
\end{prop}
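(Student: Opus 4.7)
The plan is to identify $(o^t)^{-1}:\cb(\ch)\to\cb(\ch)^\circ$ with the set-theoretic identity on the underlying vector space, to realise the operator space structure on $\cb(\ch)^\circ$ via the $*$-anti-automorphism given by entrywise transposition in a fixed basis, and then to reduce the assertion to the classical fact that the transpose map on $\cb(\ell^2(\bn))$ is bounded but not completely bounded.

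First I would unpack $o^t$. Using the canonical identifications $(M_*)^*\cong M$ and $((M^\circ)_*)^*\cong M^\circ$ together with the defining relation $o(\f)(y^\circ)=\f(y)$, a one-line duality computation gives $\f(o^t(y^\circ))=o(\f)(y^\circ)=\f(y)$ for every $\f\in M_*$, hence $o^t(y^\circ)=y$. Thus both $o^t$ and $(o^t)^{-1}$ are the canonical set-theoretic identity; all the content of the statement lies in the interaction between the two distinct operator space structures carried by $\cb(\ch)$ and $\cb(\ch)^\circ$.

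Second, to compare those structures, I would fix the standard orthonormal basis $\{e_k\}_{k\in\bn}$ of $\ch=\ell^2(\bn)$ and let $t:\cb(\ch)\to\cb(\ch)$ be the entrywise transposition in this basis. Since $t$ is a $*$-anti-automorphism of the $C^*$-algebra $\cb(\ch)$, it is a $*$-isomorphism $\cb(\ch)^\circ\to\cb(\ch)$, and hence automatically completely isometric. Therefore, for every $X=[x_{ij}]\in\bm_n(\cb(\ch))$ one has $\|X\|_{\bm_n(\cb(\ch)^\circ)}=\|[t(x_{ij})]\|_{\bm_n(\cb(\ch))}$, so $(o^t)^{-1}$ is completely bounded if and only if the family of entrywise transpositions on the $\bm_n(\cb(\ch))$ is uniformly bounded in $n$, a property that classically fails.

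Finally I would exhibit the standard matrix-unit witness. Writing $E_{ij}:=|e_i\rangle\langle e_j|\in\cb(\ch)$ and $E^{(n)}_{ij}$ for the matrix units of $\bm_n$, set $F_n:=\sum_{i,j=1}^n E^{(n)}_{ij}\otimes E_{ji}\in\bm_n(\cb(\ch))$. A short computation on the orthonormal basis $\{e_k\otimes e_l\}$ of $\bc^n\otimes\ch$ shows that $F_n$ acts as the flip on $\bc^n\otimes\mathrm{span}\{e_1,\dots,e_n\}$ and vanishes on its orthogonal complement, so $\|F_n\|_{\bm_n(\cb(\ch))}=1$. Its entrywise transpose $\sum_{i,j=1}^n E^{(n)}_{ij}\otimes E_{ij}$ sends $e_k\otimes e_l$ to $\d_{kl}\sum_{i=1}^n e_i\otimes e_i$ when $k\leq n$ and to $0$ otherwise, and is therefore $n$ times the rank-one projection onto the unit vector $\frac{1}{\sqrt{n}}\sum_{i=1}^n e_i\otimes e_i$; its norm equals $n$. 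Combining with the previous paragraph, $\|(o^t)^{-1}\otimes\id_{\bm_n}\|\geq n$ for every $n\in\bn$, and so $(o^t)^{-1}$ is not completely bounded. The only genuinely non-routine step is the identification used in the second paragraph, namely that the intrinsic operator space structure on $\cb(\ch)^\circ$ coincides with the transpose structure on $\cb(\ch)$; once that is in place, the rest is the classical rank-one projection computation with matrix units.
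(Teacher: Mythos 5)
Your proposal is correct, and it arrives at the same crux as the paper's proof --- the failure of complete boundedness of the transpose map on $\cb(\ell^2(\bn))$ --- but by a genuinely different technical route. The paper realises $\cb(\ch)^\circ$ concretely as the commutant $\pi_\t(\cb(\ch))'$ in the standard form on the Hilbert--Schmidt operators, writes the composite as $j(x)=J_\t\pi_\t(x)^*J_\t$, conjugates by the selfadjoint unitary $V$ implementing transposition on $L^2(\ch)$ to identify the matrix norms of $j\otimes\id_{\bm_n}$ with those of $\th\otimes\id_{\bm_n}$, and then quotes Tomiyama's theorem for the exact value $n$. You instead observe that entrywise transposition, being a $*$-anti-automorphism of $\cb(\ch)$, defines a $*$-isomorphism $\cb(\ch)^\circ\to\cb(\ch)$ and is therefore a complete isometry for the canonical $C^*$-operator space structures; this identifies the cb norm of $(o^t)^{-1}$ with that of the transpose map in one step, with no recourse to the standard form. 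You then replace the citation of Tomiyama by the explicit witness $F_n=\sum_{i,j}E^{(n)}_{ij}\otimes E_{ji}$, and your computation is right: $F_n$ is the flip on $\bc^n\otimes\mathrm{span}\{e_1,\dots,e_n\}$ (norm $1$), while its entrywise transpose is $n$ times the rank-one projection onto the maximally entangled unit vector (norm $n$). Your identification of $o^t(y^\circ)=y$ is also the paper's starting point. What your version buys is self-containedness and elementarity (only the lower bound $\|(o^t)^{-1}\otimes\id_{\bm_n}\|\geq n$ is needed, and you produce it by hand); what the paper's version buys is the explicit standard-form formula $j(x)=J_\t\pi_\t(x)^*J_\t$, which ties the statement to the modular machinery used throughout the rest of the paper, and the exact equality $\|j\otimes\id_{\bm_n}\|=n$ via Tomiyama. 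The one step you rightly flag as non-routine --- that the intrinsic operator space structure of $\cb(\ch)^\circ$ is the transpose structure --- is sound, since $\bm_n(\cb(\ch)^\circ)$ carries a unique $C^*$-norm and $*$-isomorphisms of $C^*$-algebras are completely isometric.
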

\begin{proof}
First we note that, in general, $(o^t)^{-1}(x)=x^\circ$, provides the canonical identification between $M$ and $M^\circ$. Let now $\t$ be the normalised ({\it i.e.} taking value 1 on minimal projections) normal, semifinite, faithful trace on $\cb(\ch)$. It is easily seen that $\cb(\ch)^\circ$ is $*$-isomorphic to
$\pi_\t(\cb(\ch))'$, where the isomorphism is given by
$$
T(x^\circ)=J_\t\pi_\t\big(o^t(x^\circ)\big)^*J_\t.
$$
With $j:=T\circ(o^t)^{-1}:\cb(\ch)\to\pi_\t(\cb(\ch))'$ given by
$j(x)=J_\t\pi_\t(x)^*J_\t$, $(o^t)^{-1}$ would be completely bounded if and only if $j$ were so. Consider now the transpose map ({\it cf.} \cite{To1, To2})
$\th:\cb(\ch)\to\cb(\ch)$ on numerical matrices. On $\ch_\t=L^2(\ch)$ corresponding to all Hilbert-Schmidt class operators acting on $\ch=\ell^2(\bn)$, we define the unitary selfadjoint operator
$$
Vx:=\th(x), \quad x\in\ch_\t.
$$
With $I_n$ the identity operator on $\bc^n$, it is then straightforward to check that
$$
\big(j\otimes\id_{\bm_n}\big)(x)=(V\otimes I_n)\big(\pi_\t\circ\th\otimes\id_{\bm_n}\big)(x)\big(V\otimes I_n),
$$
and therefore by \cite{To1}, Theorem 1.2,
$$
\big\|\big(j\otimes\id_{\bm_n}\big)(x)\big\|_{\bm_n(\cb(\ch_\t))}=\big\|\big(\th\otimes\id_{\bm_n}\big)(x)\big\|_{\bm_n(\cb(\ch))}=n.
$$
Hence, $j$ cannot be completely bounded.
\end{proof}

\section{the fourier transform for the noncommutative torus}
\label{sec3}

\noindent

For each fixed $\om\in\cs(\ba_{2\a})$ put $M:=\pi_\om(\ba_{2\a})''$, the von Neumann algebra generated by the GNS representation $\pi_\om$. 

We are interested in states $\om$ with central support in the bidual $\ba_{2\a}^{**}$, that is the cyclic vector $\xi_\om$ is also separating for $M$.

Concerning a suitable definition of the Fourier transform for the cases when the reference measure is not the trace, we have to understand what is the right replacement $w_{kl}\in\ba_{2\a}$ of the "characters" $(e^{\imath\th_1})^{k}(e^{\imath\th_2})^{l}\in C(\bt^2)$ in the commutative case, where for the tracial case correspond to $U^{k}V^{l}$. Another natural question is how to "integrate" w.r.t. the reference state. Indeed, for 
$x\in M$ and $i_{\infty,1}(x)$, $j_{\infty,1}(x)$ the left and right embedding respectively, for the definition of the Fourier coefficients we have at least the following alternatives corresponding to the left and the right embeddings of $M$ into $M_*$:
\begin{align*}
\om(w_{kl}^*x)=&\langle\pi_\om(x)\xi_\om,\pi_\om(w_{kl})\xi_\om\rangle,\\
\om(w_{kl}\s^\om_{-\imath}(x))=&\langle\pi_\om(w_{kl})\xi_\om,\pi_\om(x^*)\xi_\om\rangle.
\end{align*}
In the second alternative, the choice not to consider the adjoint of the chosen characters is only a matter of convenience, see Section \ref{altfou}.
Such a choice of the characters $w_{kl}$ may depend on the selected alternative.

Notice also that for the tracial cases and even for the commutative case of $C(\bt^2)$, these alternatives collapse to only one due to the tracial property of the underlying "measure". 
In the cases we are managing in the present section, one possible choice is reported as follows.

For $T^2=\mathpzc{f}$ as in \eqref{sqfzc}, put $f_k(m,n):=\d_{m,0}\d_{n,k}$, $g_l(m,n):=\widehat{(h_{T^2})^l}(m)\d_{n,0}$, and define
$\{u_{kl}\mid k,l\in\bz\}\subset\pi_\om(\ba_{2\a})$, where
$$
u_{kl}:=\pi_\om\big(W(f_k)W(g_l)\big),\quad k,l\in\bz.
$$
We can embed such elements in $\ch_\om=\bigoplus_\bz L^2(\bt,\di m)$ by defining
$e^{kl}:=u_{kl}\xi_\om$, obtaining
$$
e^{kl}_n(z)=z^l\d_{n,k},\quad z\in\bt,\,\,k,l\in\bz.
$$
It is easily seen that
\begin{equation}
\label{ortho}
\langle e^{kl}, e^{rs}\rangle=\d_{k,r}\oint z^{l-s}\frac{\di z}{2\pi\imath z}=\d_{k,r}\d_{l,s},
\end{equation}
that is $\{e^{kl}\mid k,l\in\bz\}\subset\ch_\om\equiv\bigoplus_\bz L^2(\bt,m)$ forms an orthonormal system
which is indeed a basis.

For $\om\in\cs(\ba_{2\a})$, we denote with an abuse of notation, again with $\om$ its vector state extension to all of $M=\pi_\om(\ba_{2\a})''$:
$$
\om(a):=\om_{\xi_\om}(a)=\langle a\xi_\om,\xi_\om\rangle,\quad a\in \pi_\om(\ba_{2\a})''.
$$
In addition, to make uniform the notation we write $L^1_0(M)=M_*$, even if $L^1_\th(M)=M_*$, independently on $\th\in[0,1]$.
\begin{defin}
For $x\in M_*=L_0^1(M)$ of the form $L_a$, we define the sequence $\widehat{x}(k,l)$ of the Fourier coefficients as
\begin{equation}
\label{for1}
\widehat{x}(k,l):=\om(u_{kl}^*a),\quad k,l\in\bz.
\end{equation}
\end{defin}
Since 
\begin{equation}
\label{forelza1}
|\om(u_{kl}^*a)|\leq \|L_a\|=\|x\|,\quad k,l\in\bz,
\end{equation}
\eqref{for1} extends by continuity to all $L_0^1(M)$, providing a sequence $\widehat{x}\in\ell^\infty(\bz^2)$.

We have the following noncommutative version of the Riemann-Lebesgue Lemma.
\begin{thm}
\label{rlcz}
We have $\widehat{L_0^1(M)}\subset c_0(\bz^2)$ with $\|\widehat{x}\|_{c_0}\leq\|x\|_{L^1}$. In addition, $\widehat{x}=0\Rightarrow x=0$.
\end{thm}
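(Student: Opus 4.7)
The plan is to split the statement into three parts---the contractive bound $\|\hat{x}\|_\infty \leq \|x\|_{L^1}$, membership of $\hat{x}$ in $c_0(\bz^2)$, and injectivity---each leveraging the orthonormal basis property of $\{e^{kl}\}$ recorded in \eqref{ortho}.

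\emph{Contractive bound and $c_0$-membership.} The bound is essentially \eqref{forelza1}: since $u_{kl}$ is unitary, for $a \in M$ we have $|\hat{L_a}(k,l)| = |L_a(u_{kl}^*)| \leq \|L_a\|_{M_*}$. The set $\{L_a : a \in M\}$ is norm-dense in $L^1_0(M)=M_*$ (if $b \in M$ annihilates all $L_a$, then $\om(bb^*) = 0$, hence $b = 0$ by faithfulness of $\om$), so the bound $|\hat{x}(k,l)| = |x(u_{kl}^*)| \leq \|x\|_{L^1}$ persists after continuous extension. For $c_0$-membership, for $a \in M$ one has
\[
\hat{L_a}(k,l) = \om(u_{kl}^*a) = \langle a\xi_\om, u_{kl}\xi_\om\rangle = \langle a\xi_\om, e^{kl}\rangle,
\]
exhibiting $\hat{L_a}$ as the sequence of Fourier coefficients of $a\xi_\om$ in the orthonormal basis $\{e^{kl}\}$. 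Bessel's identity then gives $\hat{L_a}\in\ell^2(\bz^2)\subset c_0(\bz^2)$. For general $x\in M_*$, approximating $x = \lim L_{a_n}$ in $M_*$-norm and applying the contractive bound yields $\hat{L_{a_n}}\to\hat{x}$ in $\ell^\infty$; since $c_0$ is $\ell^\infty$-closed, $\hat{x}\in c_0$.

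\emph{Injectivity.} Assume $\hat{x} = 0$, so that $x(u_{kl}^*) = 0$ for every $k,l \in \bz$. As $x \in M_*$ is $\s$-weakly continuous, it suffices to prove that $\mathrm{span}\{u_{kl}^* : k,l\in\bz\}$ is $\s$-weakly dense in $M$. Unpacking the definitions, $u_{kl} = \pi_\om(V^k)\pi_\om(\phi_l(U))$ where $\phi_l(z) := \mathpzc{h}_{T^2}(z)^l$ is interpreted via the continuous functional calculus on $\pi_\om(U)$ (so $W(g_l) = \phi_l(U)$). The linear span of $\{\phi_l : l\in\bz\}$ coincides with the unital $*$-subalgebra of $C(\bt)$ generated by $\mathpzc{h}_{T^2}$ (using $\overline{\mathpzc{h}_{T^2}} = \mathpzc{h}_{T^2}^{-1}$, since $|\mathpzc{h}_{T^2}|\equiv 1$); as $\mathpzc{h}_{T^2}$ is a homeomorphism of $\bt$, this subalgebra separates points, and the Stone--Weierstrass theorem renders it uniformly dense in $C(\bt)$. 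Hence the $M$-norm closure of $\mathrm{span}\{u_{kl}\}$ contains every $\pi_\om(V^k g(U))$ with $g\in C(\bt)$, in particular every $\pi_\om(V^k U^m)$. These linearly span $\pi_\om(\ba_{2\a}^{\mathrm{alg}})$, whose $\s$-weak closure equals $M=\pi_\om(\ba_{2\a})''$. Since the adjoint operation is $\s$-weakly continuous, $\mathrm{span}\{u_{kl}^*\}$ is also $\s$-weakly dense in $M$, forcing $x=0$.

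The delicate step is the injectivity: the orthonormality of $\{e^{kl}\}$ only guarantees cyclicity of $\xi_\om$ for the $*$-algebra generated by the $u_{kl}$, which is a \emph{vector-level} density in $\ch_\om$; lifting this to the $\s$-weak density in $M$ genuinely required for the duality pairing relies on the explicit factorisation $u_{kl}=\pi_\om(V^k)\pi_\om(\phi_l(U))$ combined with Stone--Weierstrass.
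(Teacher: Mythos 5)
Your proof is correct and follows the same overall strategy as the paper's: the contractive bound from unitarity of $u_{kl}$, an approximation argument for $c_0$-membership, and injectivity via density of $\mathrm{span}\{u_{kl}\}$ in $M$. Two sub-steps differ in a way worth noting. For $c_0$-membership the paper approximates $x$ by finite combinations of the $L_{u_{rs}}$, whose transforms are Kronecker deltas; your route through $\widehat{L_a}(k,l)=\langle a\xi_\om,e^{kl}\rangle$ and Bessel's inequality (so $\widehat{L_a}\in\ell^2\subset c_0$) is equally valid and sidesteps having to justify totality of $\{L_{u_{rs}}\}$ in $M_*$, needing only the more standard density of $\{L_a\mid a\in M\}$. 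For injectivity, the paper simply asserts that the span of the $u_{rs}$ is norm-dense in $\pi_\om(\ba_{2\a})$; your factorisation $u_{kl}=\pi_\om(V^k)\pi_\om(\phi_l(U))$ with $\phi_l=(\mathpzc{h}_{T^2})^l$ and the Stone--Weierstrass argument supply exactly the justification the paper omits, and your closing remark correctly identifies why vector-level cyclicity of $\{e^{kl}\}$ alone would not suffice for the $M_*$--$M$ duality pairing.
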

\begin{proof}
By the definition of the Fourier coefficients, if $x\in L_0^1(M)$ we see that the double sequence $\widehat{x}$ is uniformly bounded with 
$\|\widehat{x}\|_{\infty}\leq\|x\|$. 

Suppose now that $\widehat{x}=0$. This means that $x(y)=0$ on the set made of elements $y$ such that $y^*$ is of the form
$$
y^*=\sum_{|r|,|s|\leq n} a_{rs}u_{rs},
$$
which generate a norm-dense subset of $\pi_\om(\ba_{2\a})$, and a $*$-weakly dense subset of 
$\pi_\om(\ba_{2\a})''\equiv M$. Since $L_0^1(M)\equiv M_*$, by continuity we conclude that $x(y)=0$ for all $y\in M$. Then $x=0$.

It remains to check that
$$
\lim_{|k|,|l|\to+\infty}\widehat{x}(k,l)=0.
$$
In fact, for the total set $\{L_{u_{rs}}\mid r,s\in\bz\}$, by \eqref{ortho},
$$
\widehat{L_{u_{rs}}}(k,l)=\d_{k,r}\d_{l,s}\to 0,
$$
as $|k|,|l|\to+\infty$.

For generic $x\in L_0^1(M)$ and $\eps>0$, we choose a finite linear combinations of the $u_{rs}$ of the form
\begin{equation*}
x_\eps=\sum_{|r|,|s|\leq n}c_{r,s}L_{u_{rs}},
\end{equation*}
such that $\|x-x_\eps\|<\eps$.
Then
\begin{align*}
&|\widehat{x}(k,l)|\leq|\widehat{x}(k,l)-\widehat{x_\eps}(k,l)|+|\widehat{x_\eps}(k,l)|\\
\leq&\big|\om(u^*_{kl}(x-x_\eps)\big|+|\widehat{x_\eps}(k,l)|
\leq\eps+|\widehat{x_\eps}(k,l)|\to\eps,
\end{align*}
because $\widehat{x_\eps}(k,l)\to0$ as $|k|,|l|\to+\infty$ as noticed before.

The proof follows as $\eps>0$ is arbitrary.
\end{proof}
\begin{rem}
Also for this case, it is not hard to show that $\widehat{L^1_0(M)}\subsetneq c_0(\bz^2)$. 
\end{rem}
For such a purpose, it is enough to consider the sequence of Dirichlet kernels $\{D_n\mid n\in\bn\}\subset L^1(\bt,m)$, and the associated functionals $\{X_n\mid n\in\bn\}\subset L^1(M)$ given by
$$
X_n\big(\pi_\om(W(f))\big):=\int_\bt \big(\widecheck{f^{(0)}}\circ h_{T^2}^{-1}\big)(z)D_n(z)\di m(z).
$$
Denoting by $\widehat{D_n}$ the usual Fourier transforms of the $D_n$ with an abuse of the notation, the Fourier coefficients and the $L^1$-norms of the $X_n$ are given by
$$
\widehat{X_n}(k,l)=\widehat{D_n}(l)\d_{k,0}
$$
and $\|X_n\|_{L^1}=\|D_n\|_1$, respectively. Reasoning as in \cite{R}, we have $\|\widehat{X_n}\|_\infty=1$ for each $n\in\bn$, but conversely $\|X_n\|_{L^1}\to+\infty$. This leads to a contradiction if we suppose 
that $\widehat{L^1_0(M)}=c_0(\bz^2)$.

Suppose now $\xi\in\ch_\om=L^2_0(M)$. Then its left embedding $x_\xi$ in $M_*\equiv L^1_0(M)$ is given by
$$
x_\xi(\,{\bf\cdot}\,)=\langle\,{\bf\cdot}\,\xi,\xi_\om\rangle,
$$
and therefore its Fourier transform is given for $k,l\in\bz$, by
\begin{equation}
\label{for201}
\widehat{x_\xi}(k,l)=x_\xi(u^*_{kl})=\langle u^*_{kl}\xi,\xi_\om\rangle=\langle \xi, u_{kl}\xi_\om\rangle=\langle \xi,e^{kl}\rangle.
\end{equation}
Namely, the Fourier coefficients of $x_\xi$ are precisely those arising from the orthogonal expansion of $\xi$ w.r.t. the orthonormal basis $\{e^{kl}\mid k,l\in\bz\}$ of $\ch_\om$. Therefore,
\begin{equation}
\label{for2}
\|\widehat{x_\xi}\|_{\ell^2}=\|\xi\|_{L^2},
\end{equation}
that is the Fourier transform provides a unitary isomorphism when restricted to $L_0^2(M)$ as expected. 

For the Fourier transform of elements in $L_0^2(M)$, we simply write
\begin{equation}
\label{for2a}
\widehat{\xi}(k,l):=\widehat{x_\xi}(k,l)= \langle \xi,e^{kl}\rangle,\quad \xi\in \ch_\om,\,\, k,l\in\bz.
\end{equation}
We now pass to consider the extension of the Fourier transform to $L^p$, $p\in(1,2)$. Indeed, by \eqref{forelza1} which holds true for generic element in $L_0^1(M)$ ({\it cf.} Theorem \ref{rlcz}), and 
\eqref{for2}, we can define the Fourier transform for $p\in[1,2]$ and $q$ its conjugate exponent ({\it i.e.} $1/p+1/q=1$ with the convention $1/0=\infty$) by complex interpolation ({\it cf.} \cite{K}), 
$$
\cf_{p,q}: L^p_0(M)\to\ell^q(\bz^2),
$$
which satisfies 
$$
\|\cf\|_{p,q}:=\|\cf_{p,q}\|_{\cb(L^p,\ell^q)}\leq 1,\quad p\in[1,2],
$$
because of
$$
\|\cf\|_{1,\infty}\leq 1,\quad \|\cf\|_{2,2}= 1.
$$
Concerning the involved completely bounded norms, we also have
\begin{prop}
\label{cclp}
The Fourier transforms $\cf$, given in \eqref{for1} for (a norm dense set in) $L_0^1(M)$ and in \eqref{for2a} for $L_0^2(M)$, provide complete contractions w.r.t. the canonical operator space structure for the noncommutative $L^p$-spaces obtained by interpolation.
\end{prop}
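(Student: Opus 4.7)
The plan is to establish the two asserted complete contractivity statements separately at the endpoints $p=1$ and $p=2$, each by a distinct argument that exploits a specific feature of the canonical operator space structure at that endpoint; no appeal to interpolation is needed since the proposition only states these two cases.

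For $\cf\colon L^1_0(M)\to\ell^\infty(\bz^2)$, I would use the identification of the canonical operator space structure on $L^1_0(M)=M_*$ with the standard predual structure, under which $\bm_n(L^1_0(M))$ is completely isometric to the space of normal completely bounded maps $M\to\bm_n$. An element $\Phi\in\bm_n(L^1_0(M))$, viewed as such a map with $\|\Phi\|_{\bm_n(L^1_0(M))}=\|\Phi\|_{cb}$, is sent by $\cf\otimes\id_{\bm_n}$ into $\bm_n(\ell^\infty(\bz^2))\equiv\ell^\infty(\bz^2;\bm_n)$, whose entry at position $(k,l)$ is the single matrix $\Phi(u_{kl}^*)\in\bm_n$. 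Since each $u_{kl}\in\pi_\om(\ba_{2\a})$ is unitary---being the product of the unitary $\pi_\om(V^k)$ and the unitary $h_{T^2}^l\big(\pi_\om(U)\big)$ obtained by continuous functional calculus from the $\bt$-valued function $h_{T^2}^l$---one has $\|u_{kl}^*\|_M\le 1$ and therefore
\[
\|(\cf\otimes\id_{\bm_n})\Phi\|_{\bm_n(\ell^\infty(\bz^2))}=\sup_{k,l\in\bz}\|\Phi(u_{kl}^*)\|_{\bm_n}\le\|\Phi\|_{cb},
\]
which is precisely complete contractivity.

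For $\cf\colon L^2_0(M)\to\ell^2(\bz^2)$, I would invoke the identification (cited in the text after Proposition \ref{ohcb}, via \cite{F}, Theorem 5) of $L^2_0(M)$ with Pisier's operator Hilbert space ${\rm OH}(\ch_\om)$, and likewise of $\ell^2(\bz^2)$ with ${\rm OH}\big(\ell^2(\bz^2)\big)$. The defining rigidity of the ${\rm OH}$ structure is that every Hilbertian unitary between ${\rm OH}$ spaces is automatically a complete isometry. By \eqref{for2} together with the orthonormality \eqref{ortho} of the basis $\{e^{kl}\}$ of $\ch_\om$, the map $\xi\mapsto(\langle\xi,e^{kl}\rangle)_{k,l}$ is exactly such a unitary; hence it is a complete isometry and \emph{a fortiori} a complete contraction.

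The only substantive point requiring care is to verify that the operator space structures being used here really are the canonical ones produced by the interpolation prescription of Section \ref{sec2nclp}: for $L^1_0(M)$ this is the standard predual structure essentially by construction, while the coincidence $L^2_0(M)={\rm OH}(\ch_\om)$ is the nontrivial content of \cite{F}, Theorem 5, which is cited but not reproved in the paper. Once these two identifications are granted, both endpoint estimates reduce to routine checks---boundedness-by-$1$ of the generators $u_{kl}$ in the $p=1$ case, and Pisier's complete-homogeneity characterisation of ${\rm OH}$ in the $p=2$ case---and no further computation is needed; complex interpolation would then give, as a free bonus not asserted in the proposition itself, a complete contraction $\cf\colon L^p_0(M)\to\ell^q(\bz^2)$ for every $p\in[1,2]$.
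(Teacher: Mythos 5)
Your argument is correct at both endpoints and is consistent with the conventions of the paper, but it proceeds by a more structural route than the paper's own proof. For $p=1$ the paper does not invoke the identification of $\bm_n(M_*)$ with the normal completely bounded maps $M\to\bm_n$; instead it fixes $f$ in the unit ball of $\bm_n(\ell^1(\bz^2))$, constructs the auxiliary map $F=\id_M\otimes f$ on $M\overline{\otimes}\ell^\infty(\bz^2)$ (whose boundedness is checked by an explicit vector computation), observes that $u=\{u_\a\}_{\a\in\bz^2}$ is a norm-one element of that tensor product, and reduces the pairing $f\big(\cf_{1,\infty}(L_x)\big)_{(i,j)(k,l)}=L_{x_{ik}}\big(F(u^*)_{jl}\big)$ to the complete contractivity of the left embedding established in Proposition \ref{ohcb}. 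In effect the paper dualises on the target side (testing against $\bm_n(\ell^1(\bz^2))$), whereas you dualise on the source side; both arguments ultimately rest on the single fact that the $u_{kl}$ are unitaries, but yours is shorter and avoids the tensor-product detour, at the price of importing Blecher's identification $\bm_n(E^*)\cong CB(E,\bm_n)$ --- which is legitimate here precisely because the paper adopts the standard-dual convention for $M_*$ (and, as the paper itself notes, would fail under the opposite-algebra convention). For $p=2$ the two arguments are essentially the same in content: the paper's entrywise computation of $\langle\widehat{\xi},\widehat{\xi}\rangle$ via $\sum_{m,n}P_{e^{mn}}\uparrow I_{\ch_\om}$ together with \cite{F}, Proposition 1, is exactly a hands-on verification of the OH-homogeneity you quote, once $L^2_0(M)$ and $\ell^2(\bz^2)$ are both recognised as operator Hilbert spaces via \cite{F}, Theorem 5. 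Your closing remark that interpolation then yields complete contractions $\cf_{p,q}$ for all $p\in[1,2]$ is precisely the content of Theorem \ref{hy}, so nothing is lost by keeping interpolation out of the proposition itself.
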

\begin{proof}
The case $p=1$ proceeds as follows. For any element $f\in\bm_n(\ell^1(\bz^2))$, its operator norm is computed in the following way.
As in Section \ref{ops}, with $a\in\bm_m(\ell^\infty(\bz^2))$ and after defining $f(a)\in\bm_{nm}$ by
$$
f(a)_{(i,j)(k,l)}:=f_{ik}(a_{jl}),\quad i,k=1,\dots,n,\,\,j,l=1,\dots,m,
$$
we have
$$
\|f\|_{\bm_n(\ell^1(\bz^2))}:=\sup\left\{\|f(a)\|_{\bm_{nm}}\mid \|a\|_{\bm_m(\ell^\infty(\bz^2))}\leq1, m\in\bn\right\}.
$$
Firstly, we note that such an element $f$ uniquely defines a linear map on $M\overline{\otimes}\ell^\infty(\bz^2)$ given by $F:=\id_M\otimes f$, assuming the form
$$
F(a):=\sum_\a f_{ik}(\a)a_\a,
$$
where we have denoted by $\a\in\bz^2$ a generic element of $\bz^2$. This provides a bounded linear map
$$
F:M\overline{\otimes}\ell^\infty(\bz^2)\rightarrow\bm_n(M)
$$
with norm $\|F\|\leq\|f\|_{\bm_n(\ell^1(\bz^2))}$. In order to check its boundedness, we proceed as follows. 

For elements $\{\xi(i),\eta(i)\}_{i=1}^n\subset\ch_\om$, fix any orthonormal basis $\{e_r\}_{1\leq r\leq 2n}$ of 
any $2n$-dimensional space $\ck\subset\ch_\om$ containing the vectors $\xi(i)$, $\eta(i)$, $i=1,\dots,n$. Take  
$a\in M\overline{\otimes}\ell^\infty(\bz^2)$. With $a_{rs}(\a):=\langle a_\a e_r,e_s\rangle$, the matrix $\big(a_{rs}(\a)\big)_{\a\in\bz^2,\,1\leq r,s\leq 2n}$ defines an element 
$\tilde a\in\bm_{2n}(\ell^\infty(\bz^2))$ with norm less than that of $a$. As usual, we also denote with $f(a)$ the $2n^2\times 2n^2$ matrix with entries given by
$$
f(a)_{(ir),(js)}:=\sum_{\a\in\bz^2}f_{ij}(\a)(a_{rs}(\a)),\quad r,s\in\bn,\,\, i,j=1,\dots, n.
$$
The elements $\{\xi(i),\eta(i)\}_{i=1}^n\subset\ch_\om$ with components $\xi_r(i):=\langle\xi(i),e_r\rangle$, $\eta_r(i):=\langle\eta(i),e_r\rangle$ w.r.t. the previous orthonormal basis, define vectors
$\tilde\xi, \tilde\eta\in\bc^n\otimes \bc^{2n}\sim\bc^{2n^2}$ with components 
$$
\tilde\xi_{ir}:=\xi_r(i),\,\, \tilde\eta_{ir}:=\eta_r(i),\quad 1\leq r\leq n,\,\, i=1,\dots, n.
$$
We compute
\begin{align*}
&\bigg|\sum_{i,j=1}^n\langle F(a)\xi_i,\eta_j\rangle\bigg|
=\bigg|\sum_{i,j,\a}f_{ij}(\a)\langle a_\a\xi_i,\eta_j\rangle\bigg|\\
=&\bigg|\sum_{i,j;r,s}\bigg(\sum_{\a}f_{ij}(\a)a_{rs}(\a)\bigg)\xi_r(i)\overline{\eta_s(j)}\bigg|\\
=&|\langle f(a)\tilde\xi, \tilde\eta\rangle|\leq \|f(a)\|\|\tilde\xi\|\|\tilde\eta\|
\leq\|f\|_{\bm_n(\ell^1(\bz^2))}\|\tilde a\|\|\tilde\xi\|\|\tilde\eta\|\\
\leq&\|f\|_{\bm_n(\ell^1(\bz^2))}\|a\|_{M\overline{\otimes}\ell^\infty(\bz^2)}\bigg(\sum_{i=1}^n\|\xi_i\|^2\bigg)^{1/2}\bigg(\sum_{i=1}^n\|\eta_i\|^2\bigg)^{1/2},
\end{align*}
and the assertion follows.

Fix now $f\in\bm_n(\ell^1(\bz^2))$, which can be supposed to have finite support, such that $\|f\|_{\bm_n(\ell^1(\bz^2))}\leq1$.
After noticing that $\{u_\a\}_{\a\in\bz^2}$ gives rise to an element $u\in M\overline{\otimes}\ell^\infty(\bz^2)$ with unit norm, we consider a generic element $L_x\in\bm_m(M_*)$ for $x\in\bm_m(M)$ and compute
\begin{align*}
f\big(\cf_{1,\infty}(L_x)\big)_{(i,j)(k,l)}=&\sum_{\a}f_{jl}(\a)\om(u_\a^*x_{ik})
=\om\left(\bigg(\sum_{\a}f_{jl}(\a)u_\a^*\bigg)x_{ik}\right)\\
=&\om\big(F(u^*)_{jl}x_{ik}\big)=L_{x_{ik}}\big(F(u^*)_{jl}\big).
\end{align*}
Thus, by reasoning as in the proof of Proposition \ref{ohcb}, we have
$$
\left\|f\big(\cf_{1,\infty}(L_x)\big)\right\|_{\bm_{mn}}\leq\left\|L_x\right\|_{\bm_{m}(L^1)}\|F\|=\left\|L_x\right\|_{\bm_{m}(L^1)}.
$$
By taking the supremum on the r.h.s. for $f$ in the unit ball of $\bm_n(\ell^1(\bz^2))$ and on $n$, we obtain the assertion by noticing that elements of the type $L_x$ as above provide a set dense in norm of $\bm_n(M_*)$.

For the case $p=2$, pick $\xi\in\bm_n(\ch_\om)$ and consider the matrices 
$\langle\xi,\xi\rangle,\,\langle\widehat{\xi},\widehat{\xi}\rangle\in\bm_{n^2}$ whose entries are 
given by
$$
\langle\xi,\xi\rangle_{(i,k),(j,l)}:=\langle\xi_{ij},\xi_{kl}\rangle,\quad\langle\widehat{\xi},\widehat{\xi}\rangle_{(i,k),(j,l)}:=\langle\widehat{\xi_{ij}},\widehat{\xi_{kl}}\rangle.
$$
After denoting by $P_{v}$ the orthogonal projection onto the one dimensional subspace spanned by the vector $v\in\ch_\om$, by \eqref{for201}
we compute
\begin{align*}
\langle\widehat{\xi_{ij}},\widehat{\xi_{kl}}\rangle
=&\sum_{m,n\in\bz}\langle \xi_{ij}, e^{mn}\rangle\langle e^{mn}, \xi_{kl}\rangle
=\sum_{m,n\in\bz}\langle P_{e^{mn}}\xi_{ij},\xi_{kl}\rangle\\
=&\bigg\langle\bigg(\sum_{m,n\in\bz}P_{e^{mn}}\bigg)\xi_{ij},\xi_{kl}\bigg\rangle=\langle\xi_{ij},\xi_{kl}\rangle,\,\,\,i,j,k,l\in\bz,
\end{align*}
because the $e^{mn}$ provide a basis for $\ch_\om$, and thus $\big(\sum_{m,n\in\bz}P_{e^{mn}}\big)\uparrow I_{\ch_\om}$ in the strong operator topology. Consequently,
$\|\langle\widehat{\xi},\widehat{\xi}\rangle\|_{\bm_{n^2}}=\|\langle\xi,\xi\rangle\|_{\bm_{n^2}}$ and therefore $\|\cf\|^{({\rm cb})}_{2,2}= 1$ by \cite{F}, Proposition 1.
\end{proof}
\begin{rem}
When $p=2$, we have proved that $\cf_{2,2}$ is indeed a complete isometry between the canonical operator space structures of
$L_0^2(M)$ and $\ell^2(\bz^2)$, both coinciding with the Pisier OH-structure ({\it cf.} \cite{Pi}), see \cite{F}, Theorem 5.
\end{rem} 
Summarising, we have the noncommutative version of the Hausdorff-Young Theorem in our setting.
\begin{thm}
\label{hy}
Let $p\in[1,2]$ with $q\in[2,+\infty]$ its conjugate exponent. The Fourier transform restricts to a complete contraction 
$$
\cf_{p,q}: L_0^p(M)\rightarrow\ell^q(\bz^2),
$$
which is indeed a complete isometry for $p=2$.
\end{thm}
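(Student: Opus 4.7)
The plan is to obtain Theorem \ref{hy} by operator-space-valued complex interpolation between the two endpoint estimates already established in Proposition \ref{cclp} and its succeeding remark. Specifically, we have the endpoint pairs
\[
\cf_{1,\infty}:L_0^1(M)\to\ell^\infty(\bz^2),\qquad \cf_{2,2}:L_0^2(M)\to\ell^2(\bz^2),
\]
both of which are complete contractions (the second is in fact a complete isometry onto its range). The claim is that the common restriction of these two maps to the dense subspace spanned by the $L_{u_{rs}}$ extends to a complete contraction $\cf_{p,q}:L_0^p(M)\to \ell^q(\bz^2)$ for every $p\in[1,2]$ with conjugate exponent~$q$.

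The first step is to fit both sides into a complex interpolation scale of operator spaces. On the domain side, the canonical operator space structure of $L_0^p(M)$ is by definition obtained from the interpolation $C^{1/p}\bigl(\iota^0_{\infty,1}(M),M_*\bigr)$; by the reiteration theorem for complex interpolation applied to the compatible couple $\bigl(L_0^\infty(M),L_0^1(M)\bigr)$, one recovers $L_0^p(M)=C^{\theta}\bigl(L_0^1(M),L_0^2(M)\bigr)$ for $\theta=2/q\in[0,1]$. On the target side, the canonical operator space structure of $\ell^q(\bz^2)$ is the one for which $\ell^2(\bz^2)={\rm OH}(\ell^2(\bz^2))$ is the interpolation of $\ell^\infty(\bz^2)$ and $\ell^1(\bz^2)$ at $\theta=1/2$ (see \cite{Pi}); reiterating gives $\ell^q(\bz^2)=C^{\theta}\bigl(\ell^\infty(\bz^2),\ell^2(\bz^2)\bigr)$ for the same $\theta=2/q$, since $1/q=\theta/2$.

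With both endpoints identified, I then invoke Pisier's interpolation theorem for completely bounded maps: if a linear map is completely bounded with cb-norm $\leq C_0$ between the zero-level spaces and with cb-norm $\leq C_1$ between the one-level spaces, then it is completely bounded between the interpolated spaces with cb-norm at most $C_0^{1-\theta}C_1^\theta$. Applied to $\cf_{1,\infty}$ (cb-norm $\leq 1$) and $\cf_{2,2}$ (cb-norm $=1$) at the matching parameter $\theta=2/q$, this yields the desired complete contraction $\cf_{p,q}:L_0^p(M)\to\ell^q(\bz^2)$. The case $p=2$, already established, is of course the complete isometry statement.

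The main delicate point, and essentially the only one requiring care, is the compatibility of the operator space structures on both ends of the interpolation. The $L^p$-side is built into the paper's conventions, but for $\ell^q(\bz^2)$ one must make sure that the ``canonical'' structure used in the statement of the theorem is the one in which $\ell^2(\bz^2)$ carries the OH-structure, so that it arises from interpolation against $\ell^\infty(\bz^2)$ with the commutative $C^*$-algebra structure. Once this is noted, the argument reduces to a direct application of the Pisier operator interpolation machinery, with no further computation required.
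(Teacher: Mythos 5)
Your proposal is correct and follows essentially the same route as the paper: both take the endpoint complete contractions $\cf_{1,\infty}$ and $\cf_{2,2}$ from Proposition \ref{cclp} and apply exact complex interpolation of operator spaces (the paper citing $C_\th=C^\th$ from \cite{K} and \cite{F}, you invoking reiteration and Pisier's cb-interpolation theorem with the correct parameter $\th=2/q$). The only difference is that you spell out the reiteration and the OH-compatibility of the target scale, details the paper leaves to the cited references.
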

\begin{proof}
By Proposition \ref{cclp}, the assertion follows as $C_\th=C^\th$ (equal norms, Theorem 1.5 in \cite{K}), and Theorem 2 in \cite{F}) provide exact interpolation functors, see {\it e.g.}  \cite{BL}, Theorems 4.1.2 and 4.1.4.
\end{proof}
Notice that the last theorem can be viewed as a noncommutative version of the Riesz-Thorin Theorem.

The computation explained above allows to consider the noncommutative version of the Hausdorff-Young Theorem for the Fourier anti-transform defined as follows. In fact, fix any element $f\in\ell^1(\bz^2)$ and define $\widecheck{f}\in M$ given by
\begin{equation}
\label{aft1}
\widecheck{f}:=\sum_{k,l\in\bz} f(k,l)u_{kl}.
\end{equation}
The definition for the Fourier anti-transform for $p=2$ proceeds as follows. For each $f\in\ell^1(\bz^2)$ we consider the maps, the latter being the left embedding of $M$ in $\ch_\om$:
$$
f\in\ell^1(\bz^2)\mapsto\widecheck{f}\in M\mapsto \widecheck{f}\xi_\om=\sum_{k,l\in\bz}f(k,l)e^{kl}\in\ch_\om.
$$
Therefore, 
$\ell^2(\bz^2)\ni f\mapsto \widecheck{f}\in\ch_\om$ assumes the form
$$
\widecheck{f}:=\sum_{k,l\in\bz} f(k,l)e^{kl}.
$$
Concerning the $L^p$-spaces, $p\in[2,+\infty]$, the Fourier anti-transform is then associated to the left embedding of $M$ inside $M_*$. 

Denoting by $T$ such a Fourier anti-transform, we have
\begin{prop}
\label{cclp1}
The Fourier anti-transforms $T_{1,\infty}$ and $T_{2,2}$ are well defined as complete contractions w.r.t. the canonical operator space structures for the noncommutative (left) $L^p$-spaces obtained by interpolation.
\end{prop}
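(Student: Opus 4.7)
My plan is to handle the two endpoints $p=2$ and $p=\infty$ separately, in each case adapting the corresponding portion of the proof of Proposition \ref{cclp} (essentially, taking its ``transpose'').

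For $T_{2,2}$, I would mirror the $p=2$ part of that earlier proof. Given $f\in\bm_n(\ell^2(\bz^2))$, define $\widecheck f\in\bm_n(\ch_\om)$ entrywise by $(\widecheck f)_{ij}:=\sum_{k,l}f_{ij}(k,l)e^{kl}$; the series converges in $\ch_\om$ by orthonormality of $\{e^{kl}\}$. Using \eqref{ortho}, a direct computation shows that the Gram matrices $\langle\widecheck f,\widecheck f\rangle,\langle f,f\rangle\in\bm_{n^2}$ coincide entrywise. Invoking Proposition 1 of \cite{F}, as was done for $\cf_{2,2}$ at the end of the proof of Proposition \ref{cclp}, one obtains $\|\widecheck f\|_{\bm_n(L^2_0(M))}=\|f\|_{\bm_n(\ell^2(\bz^2))}$, so $T_{2,2}$ is in fact a complete isometry.

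For $T_{1,\infty}$ the key observation is that $T$ evaluated on the ``sequence of generators'' is essentially the auxiliary map $F$ already constructed in the proof of Proposition \ref{cclp}. Recall that in that proof, any $f\in\bm_n(\ell^1(\bz^2))$ was shown to induce a bounded linear map
$$
F=\id_M\otimes f:M\overline{\otimes}\ell^\infty(\bz^2)\to\bm_n(M)
$$
of norm at most $\|f\|_{\bm_n(\ell^1(\bz^2))}$, and the element $u:=(u_\a)_{\a\in\bz^2}\in M\overline{\otimes}\ell^\infty(\bz^2)$ was noted to have unit norm (each $u_\a$ being unitary). My plan is then simply to evaluate $F$ at $u$: by the defining formula for $F$ one has $F(u)_{ij}=\sum_\a f_{ij}(\a)u_\a=\widecheck{f_{ij}}$, i.e.\ $F(u)=\widecheck f$ in $\bm_n(M)$. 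This yields at once
$$
\|\widecheck f\|_{\bm_n(M)}=\|F(u)\|_{\bm_n(M)}\le\|F\|\,\|u\|\le\|f\|_{\bm_n(\ell^1(\bz^2))},
$$
which gives complete contractivity of $T_{1,\infty}$.

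The only technical point to check is that the series defining $\widecheck f$ does produce a well-defined element of the target space for generic $f$: in the $\ell^1$ case this follows from absolute norm-convergence in $M$ since $\|u_{kl}\|\le 1$, and in the $\ell^2$ case from orthonormality of $\{e^{kl}\}$. As in Proposition \ref{cclp}, the cb-norm estimate itself would first be carried out for finitely supported $f$ and then extended by density. Because the hard operator-space estimate has already been done in the previous proposition, I do not anticipate any substantial obstacle here; the whole argument is essentially a ``dualization'' of the one for $\cf$, the only mildly delicate point being the clean identification $\widecheck f=F(u)$ that makes the bound immediate.
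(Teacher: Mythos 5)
Your proposal is correct and follows essentially the same route as the paper's proof: for $p=1$ it rests on the identification $\widecheck f=F(u)$ together with the bound $\|F\|\le\|f\|_{\bm_n(\ell^1(\bz^2))}$ and $\|u\|\le 1$ already established in Proposition \ref{cclp} (the paper merely phrases the resulting norm estimate through the pairing with $g\in\bm_m(M_*)$, which by duality is the same computation), and for $p=2$ it is the same Gram-matrix comparison via orthonormality of $\{e^{kl}\}$ and Proposition 1 of \cite{F}. No gaps.
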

\begin{proof}
For the case $p=1$, we proceed as in Proposition \ref{cclp}. Firstly we note that, if $f\in\ell^1(\bz^2)$ then
$$
\|\widecheck{f}\|_{L^\infty}=\bigg\|\sum_{\a\in\bz^2}f(\a)u_a\bigg\|\leq\sum_{\a\in\bz^2}|f(\a)|=\|f\|_{\ell^1},
$$
and therefore the Fourier anti-transform is well defined as a bounded map between $\ell^1(\bz^2)$ and $L^\infty(M)$.

To check complete boundedness, fix $f\in\bm_n(\ell^1(\bz^2))$ and $g\in\bm_m(M_*)$ with $\|g\|_{\bm_m(M_*)}\leq1$,
With $F$ as before, after recalling that $\|F\|\leq\|f\|$, we compute
$$
g(\widecheck{f})_{(i,j)(k,l)}=g_{jl}\bigg(\sum_{\a}f_{ik}(\a)u_\a\bigg)
=g_{jl}\big(F(u)_{ik}\big).
$$
Therefore, $\|g(\widecheck{f})\|_{\bm_{nm}}\leq\|f\|_{\bm_n(\ell^1(\bz^2))}$
that is $T_{1,\infty}$ is completely bounded.

For $p=2$, with $f\in\ell^2(\bz^2)$ we compute
\begin{align*}
\langle \widecheck{f_{ik}},&\widecheck{f_{jl}}\rangle=\left\langle\sum_{m,n\in\bz} f_{ik}(m,n)e^{mn},\sum_{r,s\in\bz}f_{jl}(r,s)e^{rs}\right\rangle\\
=&\sum_{m,n;r,s\in\bz} f_{ik}(m,n)\overline{f_{jl}(r,s)}\langle e^{mn},e^{rs}\rangle\\
=&\sum_{m,n;r,s\in\bz} f_{ik}(m,n)\overline{f_{jl}(r,s)}\d_{m,r}\d_{n,s}=\langle f_{ik}, f_{jl}\rangle.
\end{align*}
Therefore, $T_{2,2}$ is a complete isometry concerning the left embedding.
\end{proof}
\begin{thm}
\label{hy1}
Let $p\in[1,2]$ with $q\in[2,+\infty]$ its conjugate exponent. The Fourier anti-transform extends to a
complete contraction 
$$
T_{p,q}: \ell^p(\bz^2)\rightarrow L_0^q(M),
$$
which is indeed a complete isometry for $p=2$.
\end{thm}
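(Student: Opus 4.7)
The plan is to mirror the strategy used for Theorem \ref{hy}: exhibit the Fourier anti-transform as interpolated from two compatible endpoint estimates, then invoke the fact that complex interpolation is an exact functor for the completely bounded norm on the canonical operator space structure of the noncommutative $L^p$-spaces.

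First I would recall that Proposition \ref{cclp1} already provides the two endpoint data: $T_{1,\infty}\colon \ell^1(\bz^2)\to L^\infty_0(M)$ and $T_{2,2}\colon \ell^2(\bz^2)\to L^2_0(M)$ are complete contractions (the latter in fact a complete isometry). Before invoking interpolation, I would check the compatibility condition on the intersection of the endpoint spaces: for $f\in\ell^1(\bz^2)\cap\ell^2(\bz^2)$, the left embedding of $\widecheck{f}=\sum_{k,l}f(k,l)u_{kl}\in M$ in $M_*$ is
$$
L_{\widecheck{f}}(a)=\om(a\widecheck{f})=\langle a\widecheck{f}\xi_\om,\xi_\om\rangle=\langle a\,(\widecheck{f}\xi_\om),\xi_\om\rangle,
$$
which coincides with the functional associated, via the left embedding $L^2_0(M)\hookrightarrow L^1_0(M)$, to the vector $\widecheck{f}\xi_\om=\sum_{k,l}f(k,l)e^{kl}$. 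Hence $T_{1,\infty}$ and $T_{2,2}$ are consistent on the intersection, and the pair determines a single map on the appropriate interpolation couple.

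Next I would identify the two interpolation couples. For the target, the spaces $L^p_0(M)$ are, by definition via the left embedding, given by $C^{1/p}(i_{\infty,1}(M),M_*)$; by the reiteration theorem, interpolating between $L^\infty_0(M)$ and $L^2_0(M)$ with parameter $s\in[0,1]$ yields $L^{2/s}_0(M)$. On the source side, the standard interpolation scale for $\ell^p(\bz^2)$ between $\ell^1$ and $\ell^2$ gives $\ell^{2/(2-s)}$. The parameters $p=2/(2-s)$ and $q=2/s$ are conjugate, reproducing the claimed pairs $(p,q)$ with $p\in[1,2]$, $q\in[2,\infty]$.

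Finally, invoking $C_\theta=C^\theta$ with equal norms (Theorem 1.5 in \cite{K}) together with the fact that this complex interpolation functor is exact also on the level of completely bounded maps (cf.\ \cite{BL}, Theorems 4.1.2 and 4.1.4, together with Theorem 2 in \cite{F} for the operator space identification of the interpolated $L^p$-spaces), I conclude that $T_{p,q}\colon \ell^p(\bz^2)\to L^q_0(M)$ is a complete contraction for every $p\in[1,2]$. The assertion at $p=2$ is nothing more than the complete isometry of $T_{2,2}$ already established in Proposition \ref{cclp1}. The main delicate step here is not the interpolation itself, which is a black-box application once the endpoints are in place, but rather the verification that the two endpoint maps are genuinely compatible and that the canonical operator space structures on $\ell^p(\bz^2)$ and on $L^q_0(M)$ are both obtained by complex interpolation from the chosen endpoints; once these identifications are in place, the bound follows automatically.
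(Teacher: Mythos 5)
Your proof is correct and takes essentially the same route as the paper's, which simply cites Proposition \ref{cclp1} for the two endpoint complete contractions and then invokes the exactness of the complex interpolation functor ($C_\theta=C^\theta$ with equal norms) exactly as in the proof of Theorem \ref{hy}. The only difference is that you explicitly verify the compatibility of $T_{1,\infty}$ and $T_{2,2}$ on $\ell^1(\bz^2)\cap\ell^2(\bz^2)$ and the identification of the interpolation scales, details the paper leaves implicit.
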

\begin{proof}
The proof follows analogously to that of Theorem \ref{hy}, by taking into account Proposition \ref{cclp1}.
\end{proof}

\section{the noncommutative version of the Fejer theorem}

\noindent

In our context, we provide a noncommutative version of the Fejer Theorem based on the Cesaro mean of the Fourier coefficients. By using a suitable version of the so-called "transference method", we reduce the matter to the classical convolution with the Fejer kernel on $\bt^2$ as for the classical ({\it e.g.} \cite{P}) and the tracial ({\it cf.} \cite{CXY}) cases. 
We also get the same result for the Abel mean of the Fourier coefficients by reducing the matter to the convolution with the Poisson kernel, and argue that our analysis can be used to manage all cases listed in Section 3 of \cite{CXY} and many others. 
All such cases will provide inversion formulas for the Fourier transform in $L^p$-spaces, $p=[1,2]$, arising from non type $\ty{II_1}$ representations (and also for type $\ty{II_1}$ ones when the modular operator is not trivial including the cases treated in \cite{CM}) of the noncommutative 2-torus. 

We start by noticing that the usual transference method, used in \cite{CXY} for the tracial case, cannot be directly applied to the situations under consideration. This is because the 2-parameter group of $*$-automorphisms of $\ba_{2\a}$ determined by 
$$
\ba_{2\a}\ni(U,V)\mapsto (zU,wV)\in\ba_{2\a},\quad z,w\in\bt,
$$
does not leave invariant the reference state $\om$. Yet, we can get the expected result, even for the situation studied in the present paper.

Consider the action of $\bz$ on $L^\infty(\bt,m)$ generated by the powers of the $*$-automorphism $\b$ given by
$$
\b(H)(z):=H(\mathpzc{f}^{-1}(z)),\quad H\in L^\infty(\bt,m),\,\,\, z\in\bt.
$$
The $*$-isomorphism
$$
\pi_\om(\ba_{2\a})''\sim L^\infty(\bt,m)\ltimes_{\b}\bz
$$
is realised as follows ({\it cf.} \cite{FS}, Section 6). Let $\pi_o:L^\infty(\bt,m)\to\pi_\om(\ba_{2\a})''$ be the injective $*$-homomorphism given by
$$
\pi_o(H)_n:=M_{\b^{-n}(H)}=M_{H\circ\mathpzc{f}^{n}},\quad H\in L^\infty(\bt,m),\,\,n\in\bz,
$$
where $M_h$ is the multiplication operator for the function $h$. In addition, for the shift $(\l H)_n:=H_{n-1}$, $n\in\bz$, we have
$$
\l^k\pi_o(H)\l^{-k}=\pi_o\big(\b^k(H)\big),\quad k\in\bz.
$$
Therefore, $\pi_\om(\ba_{2\a})''$ is generated by the $\pi_o(H)$ and the powers of $\l$:
$$
\pi_\om(\ba_{2\a})''=\bigg\{\sum_{|k|\leq l}\pi_o(H_k)\l^k\mid H_k\in L^\infty(\bt,m),\,\,l\in\bn\bigg\}'',
$$
see {\it e.g.} \cite{T}, Section X.1, or \cite{BR},  Section 2.7.1.

Now, let $x=\sum_{k}\pi_o(H_k)\l^k$ be a linear generator of $M=\pi_\om(\ba_{2\a})''$ with
$$
x_n=\sum_kM_{H_k\circ\mathpzc{f}^n}\l^k,\quad n\in\bz.
$$
For ${\bf w}=(w_1,w_2)\in\bt^2$, we consider the following maps $\r_{\bf w}$ defined on such generators of $M$ by
\begin{equation}
\label{trans}
\r_{\bf w}\bigg(\sum_kM_{H_k\circ\mathpzc{f}^n}\l^k\bigg):=\bigg(\sum_kw_2^{k}M_{H_k\circ\mathpzc{f}^n\circ R_{w_1}}\l^k\bigg),
\end{equation}
where for $w\in\bt$, $R_w$ denotes, with an abuse of notation, the map $R_w(z):=wz$, $z\in\bt$.
\begin{prop}
\label{pinccz}
The maps \eqref{trans} extend to $*$-automorphisms of $\pi_\om(\ba_{2\a})''$. In addition, they uniquely define contractions
$$
\r^{(p,0)}_{\bf w}:L_0^p(M)\to L_0^p(M),\quad 1\leq p<+\infty,
$$
satisfying 
\begin{equation}
\label{traions}
i_{q,p}\r^{(q,0)}_{\bf w}=\r^{(p,0)}_{\bf w} i_{q,p},\quad 1\leq p\leq q\leq+\infty.
\end{equation}
\end{prop}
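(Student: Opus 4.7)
The plan is to realise $\r_{\bf w}$ as the restriction to $M$ of an inner automorphism of $\cb(\ch_\om)$. On $\ch_\om=\ell^2(\bz;L^2(\bt,m))$ I would introduce the explicit unitary
$$
(U_{\bf w}\xi)_n(z):=w_2^n\,\xi_n(w_1 z),
$$
whose unitarity is immediate from the $R_{w_1}$-invariance of the Haar measure $m$. A direct computation on the generators gives
$$
U_{\bf w}\,\pi_o(H)\,U_{\bf w}^*=\pi_o(H\circ R_{w_1}),\qquad U_{\bf w}\,\l\,U_{\bf w}^*=w_2\l,
$$
so that on any finite sum $\sum_k\pi_o(H_k)\l^k$ one exactly recovers the right-hand side of \eqref{trans}. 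Hence $\text{Ad}(U_{\bf w})$ sends the generating set of $M$ into $M$, and the same computation applied to $U_{\bf w}^{-1}=U_{({\bf w}^{-1})}$ (of the same form) gives $U_{\bf w}MU_{\bf w}^*=M$. Consequently $\r_{\bf w}:=\text{Ad}(U_{\bf w})\big|_M$ is a well-defined $*$-automorphism of $M$ extending \eqref{trans}. The unitary implementation finesses the would-be obstruction that $R_{w_1}$ and $\mathpzc{f}^k$ need not commute: the covariance $\l^k\pi_o(H\circ R_{w_1})\l^{-k}=\pi_o(\b^k(H\circ R_{w_1}))$ holds automatically as an identity among inner conjugations.

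Next, from $(U_{\bf w}\xi_\om)_n(z)=w_2^n\d_{n,0}=(\xi_\om)_n(z)$ one reads off $U_{\bf w}\xi_\om=\xi_\om$, hence $\om\circ\r_{\bf w}=\om$. For the left embedding $i_{\infty,1}(a)=\om(\,\cdot\, a)$, I would then define $\r^{(1,0)}_{\bf w}(\f):=\f\circ\r_{\bf w}^{-1}$, a surjective isometry of $L^1_0(M)=M_*$. The identity
$$
i_{\infty,1}(\r_{\bf w}(a))(b)=\om(b\,\r_{\bf w}(a))=\om(\r_{\bf w}^{-1}(b)\,a)=\big(\r^{(1,0)}_{\bf w}\,i_{\infty,1}(a)\big)(b)
$$
delivers the intertwining at the two endpoints $p=\infty$ and $p=1$.

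Finally, to reach the intermediate $L^p$-spaces I would invoke complex interpolation. Since $\om\circ\r_{\bf w}=\om$, the automorphism $\r_{\bf w}$ commutes with $\s^\om_t$ for every $t\in\br$, so it acts compatibly on the compatible pair $(\iota^0_{\infty,1}(M),M_*)$ defining $L^p_0(M)$. Applying the functor $C^{1/p}$ yields, for each $p\in[1,+\infty)$, a contraction $\r^{(p,0)}_{\bf w}:L^p_0(M)\to L^p_0(M)$; uniqueness is automatic from continuity and from the density of the image of $M$ in every $L^p_0(M)$, and the relation \eqref{traions} with the canonical inclusions $i_{q,p}$ transfers from the analogous functorial compatibility of the interpolation. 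The main non-obvious step is the first one: turning the piecewise formula \eqref{trans} into a single, genuinely multiplicative $*$-automorphism of $M$. The unitary implementation resolves this cleanly and, as a bonus, produces the state-preservation on which all subsequent steps (predual adjoint, interpolation, intertwining) rely.
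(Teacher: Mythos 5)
Your second half — passing to the pre-adjoint on $M_*$ and then interpolating, with \eqref{traions} coming from the compatibility of the endpoint maps — is essentially the paper's own argument, and your observation that $U_{\bf w}\xi_\om=\xi_\om$, hence $\om\circ\r_{\bf w}=\om$, is correct and genuinely useful for that part. The gap is in the first half, which is the crux of the statement. Your two displayed claims about $U_{\bf w}$ are mutually inconsistent: computing with $(U_{\bf w}\xi)_n(z)=w_2^n\xi_n(w_1z)$ and $\pi_o(H)_n=M_{H\circ\mathpzc{f}^n}$ gives
$$
\big(U_{\bf w}\,\pi_o(H)\,U_{\bf w}^*\big)_n=M_{H\circ\mathpzc{f}^n\circ R_{w_1}},
$$
which is indeed the $n$-th component prescribed by \eqref{trans}, but is \emph{not} $\pi_o(H\circ R_{w_1})_n=M_{H\circ R_{w_1}\circ\mathpzc{f}^n}$, precisely because $\mathpzc{f}^n$ and $R_{w_1}$ do not commute. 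So the identity $U_{\bf w}\pi_o(H)U_{\bf w}^*=\pi_o(H\circ R_{w_1})$ fails, and with it the assertion that $\ty{Ad}(U_{\bf w})$ sends the generators of $M$ into $M$. This matters: a diagonal element of $L^\infty(\bt,m)\ltimes_\b\bz$ (diagonal with respect to the $\bz$-grading) must equal $\pi_o(G)=\bigoplus_nM_{G\circ\mathpzc{f}^n}$ for a single $G$, since the conditional expectation onto the diagonal maps $M$ onto $\pi_o(L^\infty(\bt,m))$; and $\bigoplus_nM_{H\circ\mathpzc{f}^n\circ R_{w_1}}$ has that form only if $\mathpzc{f}\circ R_{w_1}=R_{w_1}\circ\mathpzc{f}$. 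Equivalently, one checks directly that $U_{\bf w}\pi_o(H)U_{\bf w}^*$, or $U_{\bf w}u_{kl}U_{\bf w}^*$, fails to commute with $J_\om\l J_\om\in M'$ unless that commutation relation holds. So $\ty{Ad}(U_{\bf w})$ is an automorphism of $\cb(\ch_\om)$ that does not globally preserve $M$; the non-commutation of $R_{w_1}$ with $\mathpzc{f}^k$, which you flag as a ``would-be obstruction'' that the unitary implementation finesses, is in fact exactly the obstruction to this approach.

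The paper does not use a unitary implementation at this point. It factors $\r_{\bf w}=\r_{1,w_1}\r_{2,w_2}$: the gauge part $\r_{2,w_2}$ ($\l\mapsto w_2\l$, $\pi_o(H)$ fixed) is the dual action and genuinely is implemented by your $U_{(1,w_2)}$, which normalises $M$; for the rotation part $\r_{1,w_1}$ it instead invokes the covariance theorem for crossed products (Takesaki, Theorem X.1.7), i.e. it works at the level of the covariant system $\big(L^\infty(\bt,m),\b\big)$ rather than by inner conjugation in $\cb(\ch_\om)$. To repair your argument you would need either to follow that route, or to prove directly (not via $\ty{Ad}(U_{\bf w})$) that the assignment on generators prescribed by \eqref{trans} extends to a normal $*$-homomorphism of $M$ into itself with inverse $\r_{{\bf w}^{-1}}$; the unitary $U_{\bf w}$ alone does not deliver this.
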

\begin{proof}
Notice that $\r_{\bf w}=\r_{1,w_1}\r_{2,w_2}=\r_{2,w_2}\r_{1,w_1}$, where
\begin{align*}
\r_{1,w}&\bigg(\sum_kM_{H_k\circ\mathpzc{f}^n}\l^k\bigg):=\bigg(\sum_kM_{H_k\circ\mathpzc{f}^n\circ R_{w}}\l^k\bigg),\\
\r_{2,w}&\bigg(\sum_kM_{H_k\circ\mathpzc{f}^n}\l^k\bigg):=\bigg(\sum_kw^kM_{H_k\circ\mathpzc{f}^n}\l^k\bigg).
\end{align*}
It is easily seen that $\r_{2,w}$ define automorphisms of $M$. By Theorem X.1.7 in \cite{T}, $\r_{1,w}$ also define automorphisms of $M$. 

The pre-transpose map 
$$
{}_t\r_{\bf w}:=\r_{\bf w}^t\lceil_{M_*},
$$ 
also defines an isometry of $M_*=L_0^1(M)$ satisfying
$$
{}_t\r_{\bf w}\lceil_{M}=\r_{\bf w},
$$ 
because $M\sim i_{\infty,1}(M)\equiv L_M=L_0^\infty(M)\subset L_0^1(M)$.
Therefore, by interpolation ({\it cf.} \cite{BL}), $\r_{\bf w}$ define contractions 
$$
\r^{(p,0)}_{\bf w}:L^p_0(M)\to L^p_0(M)
$$ 
satisfying \eqref{traions}. 
\end{proof}
We note that it is possible to verify by direct inspection that the $\r^{(2,0)}_{\bf w}$ are isometric isomorphisms. Indeed, if $x\in\bigoplus_\bz L^2(\bt,m)$, for ${\bf w}=(w_1,w_2)\in\bt^2$ we get
$$
\big\|\r_{\bf w}^{(2,0)}(x)\big\|^2=\sum_{n\in\bz}\int_\bt\big|w_2^nx_n(w_1z)\big|^2\di m(z)=\sum_{n\in\bz}\int_\bt\big|x_n(z)\big|^2\di m(z)=\|x\|^2.
$$
The following results are crucial in the sequel.
\begin{lem}
\label{wts}
Let $x\in L_0^p(M)$, and ${\bf w}=(w_1,w_2)\in\bt^2$, then 
$$
\widehat{\r^{(p,0)}_{\bf w}(x)}(k,l)=w_1^{-l}w_2^{-k}\widehat{x}(k,l),\quad k,l\in\bz.
$$
\end{lem}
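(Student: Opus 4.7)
The plan is to reduce the identity to a direct computation on a norm-dense set and then extend by interpolation. Both sides of the claim depend linearly and continuously on $x$: $\r^{(p,0)}_{\bf w}$ is a contraction by Proposition \ref{pinccz}, and $\cf_{p,q}$ is a complete contraction from $L^p_0(M)$ to $\ell^q(\bz^2)$ by Theorem \ref{hy}. Combining this with the intertwining relation \eqref{traions} and the $\sigma$-weak density in $M$ of the linear span of monomials $\pi_o(H)\l^j$, $H\in L^\infty(\bt,m)$, $j\in\bz$ (Theorem X.1.7 in \cite{T}), it is enough to verify the identity at $p=1$ on $x=L_a$ with $a=\pi_o(H)\l^j$.

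For such an $a$, using $(\xi_\om)_n=\d_{n,0}$, $(\l^j\xi_\om)_n=\d_{n,j}$ and $\pi_o(H)_n=M_{H\circ\mathpzc{f}^n}$, one obtains the concrete vector representative $(a\xi_\om)_n(z)=H(\mathpzc{f}^n(z))\d_{n,j}$. Pairing against the basis vector $e^{kl}$ with $(e^{kl})_n(z)=z^l\d_{n,k}$ yields
\[
\widehat{L_a}(k,l)=\om(u_{kl}^*a)=\langle a\xi_\om,e^{kl}\rangle=\d_{j,k}\int_\bt H(\mathpzc{f}^k(z))\,z^{-l}\,\di m(z).
\]
The analogous computation for $\r^{(1,0)}_{\bf w}(L_a)$, combining the description of $\r^{(1,0)}_{\bf w}$ on $i_{\infty,1}(M)=L_M$ from the proof of Proposition \ref{pinccz} with the explicit formula \eqref{trans} and the $\r_{\bf w}$-invariance of $\om$ (readily verified on the same monomials: $\om$ vanishes on off-diagonal terms, and on the diagonal part the action reduces to integration against the Haar measure, which is rotation-invariant), reduces $\widehat{\r^{(1,0)}_{\bf w}(L_a)}(k,l)$ to an integral of the form $w_2^{\pm k}\int_\bt H(\mathpzc{f}^k(w_1^{\pm 1}z))z^{-l}\di m(z)$.

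The concluding step is the change of variable $w:=w_1^{\pm1}z$ on $\bt$, whose Jacobian is trivial by translation invariance of $m$; it converts $z^{-l}$ into $w_1^{\mp l}w^{-l}$ and thereby multiplies $\widehat{L_a}(k,l)$ by matching powers of $w_1$ and $w_2$. The main subtlety is the sign bookkeeping: the $\om$-invariance together with the pre-transpose convention used for ${}_t\r_{\bf w}$ in the proof of Proposition \ref{pinccz} forces the action on $L_M$ to be $L_a\mapsto L_{\r_{\bf w}^{-1}(a)}$, which is exactly what produces the \emph{negative} exponents $w_1^{-l}w_2^{-k}$ claimed in the statement (rather than the naive $w_1^l w_2^k$ coming from a direct application of \eqref{trans} to $a$). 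Once the case $p=1$ is settled, the extension to all $p\in(1,2]$ is immediate from \eqref{traions}, the norm-density of $i_{\infty,p}(L^\infty_0(M))$ in $L^p_0(M)$, and the uniform contractivity estimates already invoked.
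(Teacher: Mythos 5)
Your argument is correct, and it follows the paper's overall skeleton: reduce to $p=1$ via the compatibility \eqref{traions} of $\r^{(p,0)}_{\bf w}$ with the embeddings, verify the identity on a norm-dense subset of $M_*$, and extend by contractivity. The central computation, however, is organized differently. The paper never touches $a$: since $\r^{(1,0)}_{\bf w}$ is the pre-transpose of $\r_{\bf w}$, one has $\widehat{\r^{(1,0)}_{\bf w}(L_a)}(k,l)=L_a\big(\r_{\bf w}(u_{kl})^*\big)$ for \emph{arbitrary} $a\in M$, and the single identity $\r_{\bf w}(u_{kl})\xi_\om=w_1^{l}w_2^{k}u_{kl}\xi_\om$ (equation \eqref{trcfi}) finishes the proof in two lines, with no change of variables and no invariance of $\om$ required. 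You instead push the automorphism onto $a$, which forces you to observe that $\om\circ\r_{\bf w}=\om$, so that ${}_t\r_{\bf w}(L_a)=L_{\r_{\bf w}^{-1}(a)}$, and then to compute explicitly on monomials $\pi_o(H)\l^j$ with a change of variable on $\bt$. Your sign bookkeeping is right and is in fact a useful clarification: the restriction of ${}_t\r_{\bf w}$ to $i_{\infty,1}(M)$ is conjugation by $\r_{\bf w}^{-1}$, not by $\r_{\bf w}$ as the proof of Proposition \ref{pinccz} loosely states, and this is exactly what produces the negative exponents. Two points to tighten: (a) $\sigma$-weak density of the span of monomials in $M$ is not the right justification for approximation in the $L^1=M_*$ norm; what you need is that $\{L_a\mid a\ \text{a finite sum of monomials}\}$ is norm dense in $M_*$, which holds because its annihilator in $M=(M_*)^*$ is $\{x\mid x^*\xi_\om\perp\{a\xi_\om\}\}=\{0\}$, the vectors $a\xi_\om$ being dense in $\ch_\om$ and $\xi_\om$ separating; (b) for $p>1$ no density argument is needed at all, since $\widehat{x}=\widehat{i_{p,1}(x)}$ by definition and \eqref{traions} transports the $p=1$ identity verbatim.
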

\begin{proof}
If $p>1$, one has (and it can be taken as the definition of the Fourier transform for $p>2$)
$$
\widehat{x}(k,l)=\widehat{i_{p,1}(x)}(k,l)=i_{p,1}(x)(u_{kl}^*),
$$
where if $x\in M=L_0^\infty(M)$, $\widehat{x}=\widehat{L_x}\equiv \widehat{i_{\infty,1}(x)}$.
Thus, we can restrict the analysis to $p=1$. 

For such a purpose, for each $x\in L_0^1(M)$ and $\eps>0$, we take $x_\eps\in M$
such that $\|x-i_{\infty,1}(x_\eps)\|<\eps$.
As $\eps$ is arbitrary and
\begin{align*}
\big|w_1^{-l}w_2^{-k}\widehat{x}(k,l)-&\widehat{\r^{(1,0)}_{\bf w}(x)}(k,l)\big|
\leq\big|\widehat{x}(k,l)-\widehat{i_{\infty,1}(x_\eps)}(k,l)\big|\\
+&\big|w_1^{-l}w_2^{-k}\widehat{i_{\infty,1}(x_\eps)}(k,l)-\widehat{\r^{(1,0)}_{\bf w}(i_{\infty,1}(x_\eps))}(k,l)\big|\\
+&\big|\widehat{\r^{(1,0)}_{\bf w}(i_{\infty,1}(x_\eps))}(k,l)-\widehat{\r^{(1,0)}_{\bf w}(x)}(k,l)\big|\\
\leq&\|x-i_{\infty,1}(x_\eps)\|_1\\
+&\big|w_1^{-l}w_2^{-k}\widehat{i_{\infty,1}(x_\eps)}(k,l)-\widehat{\r^{(1,0)}_{\bf w}(i_{\infty,1}(x_\eps))}(k,l)\big|\\
+&\big\|\r^{(1,0)}_{\bf w}\big(i_{\infty,1}(x_\eps)-x\big)\big\|_1\\
\leq&2\eps+\big|w_1^{-l}w_2^{-k}\widehat{i_{\infty,1}(x_\eps)}(k,l)-\widehat{\r^{(1,0)}_{\bf w}(i_{\infty,1}(x_\eps))}(k,l)\big|,
\end{align*}
the assertion will follow if we prove it for the generators of $L^1(M)=M_*$ of the form $L_{a}\equiv i_{\infty,1}(a)$.
We first note that
\begin{equation}
\label{trcfi}
\r_{\bf w}(u_{kl})\xi_\om=w_1^{l}w_2^{k}u_{kl}\xi_\om,
\end{equation}
and then
\begin{align*}
\widehat{\r^{(1,0)}_{\bf w}\big(L_{a}\big)}(k,l)=&L_{a}\big(\r_{\bf w}(u_{kl})^*\big)
=\langle a\xi_\om,\r_{\bf w}(u_{kl})\xi_\om\rangle\\
=&w_1^{-l}w_2^{-k}\langle a\xi_\om,u_{kl}\xi_\om\rangle
=w_1^{-l}w_2^{-k}\widehat{L_{a}}(k,l).
\end{align*}
\end{proof}
\begin{prop}
\label{pisy}
For each $1\leq p\leq2$, we get
$$
L^p\!\!-\!\!\lim_{{\bf w}\to {\bf 1}}\r^{(p,0)}_{\bf w}x=x,\quad x\in L_0^p(M).
$$
\end{prop}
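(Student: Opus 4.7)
The approach is the standard $\eps/3$ argument: combine the uniform contractivity from Proposition \ref{pinccz} with explicit norm convergence on a conveniently chosen dense subset.

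First, I would pin down a dense subspace on which $\r^{(p,0)}_{\bf w}$ acts by a simple diagonal formula. The linear span $\cd_0 := \mathop{\rm span}\{u_{kl}\mid k,l\in\bz\}$ is norm-dense in $\pi_\om(\ba_{2\a})$, as already used in the proof of Theorem \ref{rlcz}. Since $\pi_\om(\ba_{2\a})$ is $*$-weakly (hence weakly) dense in $M$, a Hahn-Banach argument in the duality $\big(L^1_0(M)\big)^*\equiv M$ shows that $i_{\infty,1}(\cd_0)$ is norm-dense in $L^1_0(M)$; together with the fact that $i_{\infty,p}(M)$ is norm-dense in each $L^p_0(M)$ ($p\in[1,+\infty)$), this gives that $\cd := i_{\infty,p}(\cd_0)$ is norm-dense in $L^p_0(M)$ for every $p\in[1,+\infty)$.

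Next, on $\cd$ I would make the action of $\r^{(p,0)}_{\bf w}$ completely explicit. The identity \eqref{trcfi} reads $\r_{\bf w}(u_{kl})\xi_\om=w_1^l w_2^k u_{kl}\xi_\om$, and since $\xi_\om$ is separating for $M$, this upgrades to the operator identity $\r_{\bf w}(u_{kl})=w_1^l w_2^k u_{kl}$ in $M$. Using linearity and the intertwining relation \eqref{traions}, for any $x=\sum_{|k|,|l|\leq N}c_{kl}u_{kl}\in\cd_0$ one gets
$$
\r^{(p,0)}_{\bf w}\big(i_{\infty,p}(x)\big)-i_{\infty,p}(x)=i_{\infty,p}\bigg(\sum_{|k|,|l|\leq N}c_{kl}\big(w_1^l w_2^k-1\big)u_{kl}\bigg).
$$
Since $i_{\infty,p}$ is a contraction and the right-hand side is a finite sum depending continuously on ${\bf w}\in\bt^2$, its $L^p$-norm is bounded by the operator norm of the bracketed expression, which tends to $0$ as ${\bf w}\to{\bf 1}$.

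Finally, I would glue the pieces together. For arbitrary $x\in L^p_0(M)$ and $\eps>0$, pick $y\in\cd$ with $\|x-y\|_p<\eps/3$. By contractivity of $\r^{(p,0)}_{\bf w}$,
$$
\big\|\r^{(p,0)}_{\bf w}x-x\big\|_p\leq\big\|\r^{(p,0)}_{\bf w}(x-y)\big\|_p+\big\|\r^{(p,0)}_{\bf w}y-y\big\|_p+\|y-x\|_p\leq\frac{2\eps}{3}+\big\|\r^{(p,0)}_{\bf w}y-y\big\|_p,
$$
and the previous step makes the remaining term smaller than $\eps/3$ for ${\bf w}$ in a suitable neighbourhood of ${\bf 1}$. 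The main potential friction point is the density of $\cd$ in $L^p_0(M)$; the case $p=1$ follows from the Hahn-Banach / $*$-weak density argument sketched above, and the general case $p\in(1,+\infty)$ then follows by interpolation between the contractive embeddings $i_{\infty,p}$ and $i_{\infty,1}$, so no substantial obstacle appears.
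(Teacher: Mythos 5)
Your proof follows the same $\eps/3$ template as the paper (uniform contractivity from Proposition \ref{pinccz} plus explicit convergence on a dense subset), but anchors the convergence on a different dense set. The paper first settles the case $p=2$ directly on $\ch_\om=\bigoplus_{n\in\bz}L^2(\bt,m)$, where $\r^{(2,0)}_{\bf w}$ acts by the explicit formula $(\r^{(2,0)}_{\bf w}x)_n(z)=w_2^nx_n(w_1z)$ and dominated convergence gives $\|\r^{(2,0)}_{\bf w}x-x\|_2\to0$, and then transfers to $1\le p<2$ through the dense range of $i_{2,p}$, the intertwining \eqref{traions} and contractivity. Your choice of $\mathop{\rm span}\{i_{\infty,p}(u_{kl})\}$ is equally legitimate (density follows most quickly from the facts that the $e^{kl}=i_{\infty,2}(u_{kl})$ form an orthonormal basis of $L^2_0(M)$ and that $i_{2,p}$ has dense range), and it buys an exact eigenvector relation in place of a dominated-convergence estimate.

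The one step I would not let stand as written is the ``upgrade'' of \eqref{trcfi} to the operator identity $\r_{\bf w}(u_{kl})=w_1^lw_2^ku_{kl}$ in $M$. The syllogism (equal on the separating vector $\xi_\om$, hence equal) is formally valid, but the conclusion is in tension with the definition \eqref{trans}: computing directly, the $n$-th component of $\r_{\bf w}(u_{kl})$ is $w_2^kM_{(\mathpzc{f}^{n-k}(w_1\cdot))^l}\l^k$, which is not the scalar $w_1^lw_2^k$ times the component $M_{(\mathpzc{f}^{n-k}(\cdot))^l}\l^k$ of $u_{kl}$ except at $n=k$ or when $\mathpzc{f}$ commutes with rotations; moreover the assignment $u_{kl}\mapsto w_1^lw_2^ku_{kl}$ cannot be the restriction of a $*$-automorphism, since $u_{kl}u_{k'l'}$ expands as $\sum_mc_mu_{k+k',m}$ with several nonzero coefficients. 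So this operator identity should not be relied upon. Fortunately you do not need it: since $p\le2$ (which is all the proposition claims), factor $i_{\infty,p}=i_{2,p}\circ i_{\infty,2}$, note that $i_{\infty,2}(u_{kl})=e^{kl}$ and that, by \eqref{trcfi} --- a purely vector-level statement --- one has $\r^{(2,0)}_{\bf w}e^{kl}=w_1^lw_2^ke^{kl}$; then \eqref{traions} with $q=2$ gives $\r^{(p,0)}_{\bf w}\big(i_{\infty,p}(u_{kl})\big)=i_{2,p}\big(\r^{(2,0)}_{\bf w}e^{kl}\big)=w_1^lw_2^k\,i_{\infty,p}(u_{kl})$, and the rest of your argument goes through unchanged.
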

\begin{proof}
For $x\in L_0^2(M)\sim\ch_\om=\oplus_{n\in\bz} L^2(\bt,\di m)$, we get by the Lebesgue dominated convergence theorem,
$$
\big\|\r^{(2,0)}_{\bf w}x-x\big\|_2^2=\sum_{n\in\bz}\int_\bt\di m(z)\big|w^{n}_2x_n(w^{1}_1z)-x_n(z)\big|^2\to0,
$$
as ${\bf w}\to {\bf 1}\equiv(1,1)\in\bt^2$. Thus, the result holds true for $p=2$. For $1\leq p<2$ and $\eps>0$, let now $x\in L_0^p(M)$. 
As $i_{2,p}(L^2)$ is dense in each such $L^p$-spaces, we pick $x_\eps\in L_0^2(M)$ such that $\|x-i_{2,p}(x_\eps)\|_p<\eps$. Again by Proposition \ref{pinccz}, we also have 
$\big\|\r^{(p,0)}_{\bf w}(x-i_{2,p}(x_\eps))\big\|_p<\eps$ for each 
${\bf w}\in\bt^2$. We then compute by the Holder inequality,
\begin{align*}
\big\|\r^{(p,0)}_{\bf w}x-x\big\|_p<&2\eps+\big\|\r^{(p,0)}_{\bf w}(i_{2,p}(x_\eps))-i_{2,p}(x_\eps)\big\|_p\\
\leq&2\eps+\big\|i_{2,p}\big(\r^{(2,0)}_{\bf w}(x_\eps)-x_\eps\big)\big\|_p\\
\leq&2\eps+\big\|\r^{(2,0)}_{\bf w}x_\eps-x_\eps\big\|_2.
\end{align*}
As $\eps>0$ is arbitrary, by the previous part $\big\|\r^{(p,0)}_{\bf w}x-x\big\|_p\to0$ whenever ${\bf w}\to {\bf 1}$.
\end{proof}
Here, there is the noncommutative version of the Fejer theorem in our context. It will provide an inversion formula for the Fourier transform for $1\leq p\leq 2$.
\begin{thm}
\label{fece}
Let $1\leq p\leq 2$ and $x\in L_0^p(M)$. Then we get,
$$
L^p\!\!-\!\!\lim_{N\to+\infty}\sum_{|k|,|l|\leq N}\bigg(1-\frac{|k|}{N+1}\bigg)\bigg(1-\frac{|l|}{N+1}\bigg)\widehat{x}(k,l)i_{\infty,p}(u_{kl})=x.
$$
\end{thm}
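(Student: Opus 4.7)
The plan is to apply the transference method through the group action $\rho_{\bf w}$ of Proposition \ref{pinccz}, reducing the statement to the classical convergence of Cesaro means on $\bt^2$. First, I would upgrade ${\bf w}\mapsto\rho^{(p,0)}_{\bf w}(x)$ to a strongly continuous map $\bt^2\to L^p_0(M)$. Continuity at $\mathbf 1$ is the content of Proposition \ref{pisy}, and since $\rho$ is a group action by $L^p$-isometries,
$$\big\|\rho^{(p,0)}_{{\bf w}_0{\bf w}}(x)-\rho^{(p,0)}_{{\bf w}_0}(x)\big\|_p=\big\|\rho^{(p,0)}_{\bf w}(x)-x\big\|_p\,,$$
which propagates continuity to every base-point ${\bf w}_0\in\bt^2$.

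Next, with $F_N(w):=\sum_{|k|\le N}\bigl(1-|k|/(N+1)\bigr)w^k$ denoting the classical one-variable Fejer kernel (so $F_N\ge 0$ and $\int_\bt F_N\,dm=1$), I would introduce the Bochner integral
$$\sigma_N(x):=\int_{\bt^2}F_N(w_1)F_N(w_2)\,\rho^{(p,0)}_{\bf w}(x)\,dm(w_1)dm(w_2)\in L^p_0(M).$$
Two points must then be verified. First, $\sigma_N(x)$ coincides with the Cesaro sum in the theorem: for each $(k,l)\in\bz^2$, the Fourier coefficient functional $\widehat{(\,\cdot\,)}(k,l)$ is continuous on $L^p_0(M)$ for $1\le p\le 2$ (as extends from Theorem \ref{rlcz} through the contractive chain $L^p_0\hookrightarrow L^1_0$, in agreement with Theorem \ref{hy}), hence it commutes with the Bochner integral. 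Combined with Lemma \ref{wts} and the standard computation of Fejer coefficients, this gives
$$\widehat{\sigma_N(x)}(k,l)=\bigl(1-|k|/(N+1)\bigr)\bigl(1-|l|/(N+1)\bigr)\widehat{x}(k,l)$$
for $|k|,|l|\le N$ and $0$ otherwise, so injectivity of the Fourier transform identifies $\sigma_N(x)$ with the sum $\sum_{|k|,|l|\le N}(1-|k|/(N+1))(1-|l|/(N+1))\widehat{x}(k,l)\,i_{\infty,p}(u_{kl})$ appearing in the statement. Second, Minkowski's integral inequality together with $\iint F_NF_N\,dm\,dm=1$ yields
$$\|\sigma_N(x)-x\|_p\le\int_{\bt^2}F_N(w_1)F_N(w_2)\,\big\|\rho^{(p,0)}_{\bf w}(x)-x\big\|_p\,dm(w_1)dm(w_2).$$
The integrand on the right is bounded by $2\|x\|_p$, continuous in ${\bf w}$, and vanishes at $\mathbf 1$ by Proposition \ref{pisy}; since $F_N\otimes F_N$ is a summability kernel on $\bt^2$ concentrating at $\mathbf 1$, the right-hand side tends to $0$ as $N\to\infty$, completing the proof.

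The main obstacle, I expect, is the bookkeeping around the Fourier coefficients on the scale of $L^p$-spaces: one must be careful that the functionals $\widehat{(\,\cdot\,)}(k,l)$ are well defined and continuous on each $L^p_0(M)$ with $1\le p\le 2$ (not merely on $L^1_0(M)$ where they were originally introduced), that equality of all Fourier coefficients forces equality in $L^p_0(M)$, and that the embedding $i_{\infty,p}$ is respected throughout. Once this harmonic-analytic scaffolding is in place, the argument is essentially a verbatim transcription of the classical Fejer-kernel convergence on $\bt^2$, transferred to $L^p_0(M)$ via the isometric action $\rho_{\bf w}^{(p,0)}$.
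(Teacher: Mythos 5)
Your proposal is correct and follows essentially the same route as the paper: both represent the Cesaro mean as the Bochner integral of $\r^{(p,0)}_{\bf w}(x)$ against the product Fejer kernel, identify the two expressions by evaluating Fourier coefficients (equivalently, pairing with the total set $j_{\infty,q}(u^*_{rs})$ in the dual) via Lemma \ref{wts}, and conclude from Proposition \ref{pisy} and the approximate-identity property of $\F_N$. Your explicit propagation of continuity of ${\bf w}\mapsto\r^{(p,0)}_{\bf w}(x)$ to all base points and the Minkowski-inequality estimate merely spell out details the paper leaves implicit.
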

\begin{proof}
We first prove that, for each fixed $x\in L^p_0(M)$ and $N=1,2,\dots$, 
\begin{align*}
&\sum_{|k|,|l|\leq N}\bigg(1-\frac{|k|}{N+1}\bigg)\bigg(1-\frac{|l|}{N+1}\bigg)\widehat{x}(k,l)i_{\infty,p}(u_{kl})\\
=&\int_{\bt^2}\di m(w_1)\di m(w_2)\F_N(w_1)\F_N(w_2)\r^{(p,0)}_{w_1,w_2}(x),
\end{align*}
where $\F_N$ is the Fejer kernel. By a standard 3$\eps$-argument, it is enough to check the identity for the total set (in $L^p_0(M)'\sim L_1^q(M)$, $q$ being the conjugate exponent) of elements of the type  
$j_{\infty,q}(u^*_{rs})$, $r,s\in\bz$. 
By Lemma \ref{wts},
$$
\r^{(p,0)}_{w_1,w_2}(x)\big(j_{\infty,q}(u^*_{rs})\big)=\widehat{\r^{(p,0)}_{w_1,w_2}(x)}(r,s)=w_1^{-s}w_2^{-r}\widehat{x}(r,s),
$$
and thus
\begin{align*}
&\int_{\bt^2}\di m(w_1)\di m(w_2)\F_N(w_1)\F_N(w_2)\r^{(p,0)}_{w_1,w_2}(x)\big(j_{\infty,q}(u^*_{rs})\big)\\
=&\widehat{x}(r,s)\int_{\bt}\di m(w_1)\F_N(w_1)w_1^{-s}\int_{\bt}\di m(w_2)\F_N(w_2)w_1^{-r}\\
=&\widehat{\F_N}(r)\widehat{\F_N}(s)\widehat{x}(r,s)\\
=&\bigg(1-\frac{|r|}{N+1}\bigg)\bigg(1-\frac{|s|}{N+1}\bigg)\widehat{x}(r,s)\\
=&\sum_{|k|,|l|\leq N}\bigg(1-\frac{|k|}{N+1}\bigg)\bigg(1-\frac{|l|}{N+1}\bigg)\widehat{x}(k,l)\om(u^*_{rs}u_{kl})\\
=&\sum_{|k|,|l|\leq N}\bigg(1-\frac{|k|}{N+1}\bigg)\bigg(1-\frac{|l|}{N+1}\bigg)\widehat{x}(k,l)i_{\infty,p}(u_{kl})\big(j_{\infty,q}(u^*_{rs})\big).
\end{align*}
The proof now follows by Proposition \ref{pisy} as the Fejer kernel $\F_N(z)$ is an approximate identity ({\it e.g.} \cite{P}) on $\bt$.
\end{proof}
We report the following generalisation corresponding to the Abel mean associated to the Poisson kernel, whose proof is completely analogous to the previous one. The details relative to the remaining cases in \cite{CXY} are left to the reader.
\begin{thm}
\label{abt1}
Let $1\leq p\leq 2$ and $x\in L_0^p(M)$. Then we get,
$$
L^p\!\!-\!\!\lim_{r\uparrow1}\sum_{k,l\in\bz}r^{|k|+|l|}\widehat{x}(k,l)i_{\infty,p}(u_{kl})=x.
$$
\end{thm}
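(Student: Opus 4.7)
The proof proceeds exactly in parallel with that of Theorem~\ref{fece}, but with the Fejer kernel replaced by the Poisson kernel $\cp_r(z):=\sum_{n\in\bz}r^{|n|}z^n$, whose Fourier coefficients are $\widehat{\cp_r}(n)=r^{|n|}$ and which forms an approximate identity on $\bt$ as $r\uparrow 1$ (see e.g.~\cite{P}).

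The plan is as follows. First, for each fixed $x\in L^p_0(M)$ and each $r\in[0,1)$, I would establish the transference identity
\begin{align*}
\sum_{k,l\in\bz}r^{|k|+|l|}\widehat{x}(k,l)\,i_{\infty,p}(u_{kl})
=\int_{\bt^2}\di m(w_1)\di m(w_2)\,\cp_r(w_1)\cp_r(w_2)\,\r^{(p,0)}_{w_1,w_2}(x).
\end{align*}
As in the proof of Theorem~\ref{fece}, by a $3\eps$-argument it suffices to test both sides against the total set $\{j_{\infty,q}(u^*_{rs})\mid r,s\in\bz\}\subset L^p_0(M)'\sim L^q_1(M)$, where $q$ is the conjugate exponent of $p$. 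Using Lemma~\ref{wts}, the right-hand side applied to $j_{\infty,q}(u^*_{rs})$ evaluates to
$$
\widehat{x}(r,s)\int_{\bt}\di m(w_1)\cp_r(w_1)w_1^{-s}\int_{\bt}\di m(w_2)\cp_r(w_2)w_2^{-r}
=\widehat{\cp_r}(r)\widehat{\cp_r}(s)\widehat{x}(r,s),
$$
which equals $r^{|r|+|s|}\widehat{x}(r,s)$, and this matches the pairing of the left-hand side with $j_{\infty,q}(u^*_{rs})$ upon using the orthogonality relation $\om(u^*_{rs}u_{kl})=\d_{k,r}\d_{l,s}$ from \eqref{ortho}.

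Next, to pass to the limit $r\uparrow 1$, I would exploit that $\cp_r(w_1)\cp_r(w_2)$ is an approximate identity on $\bt^2$ concentrated at ${\bf 1}=(1,1)$. The argument reduces to showing
$$
\Bigl\|\int_{\bt^2}\cp_r(w_1)\cp_r(w_2)\bigl(\r^{(p,0)}_{w_1,w_2}(x)-x\bigr)\di m(w_1)\di m(w_2)\Bigr\|_p\xrightarrow[r\uparrow 1]{}0.
$$
By the triangle inequality (Bochner integral) together with Proposition~\ref{pinccz} giving uniform bound $\|\r^{(p,0)}_{\bf w}(x)-x\|_p\leq 2\|x\|_p$, and Proposition~\ref{pisy} ensuring $\|\r^{(p,0)}_{\bf w}(x)-x\|_p\to 0$ as ${\bf w}\to {\bf 1}$, one splits the integral into a small neighbourhood of ${\bf 1}$ (where the integrand is small in norm) and its complement (where $\cp_r(w_1)\cp_r(w_2)$ has vanishing mass as $r\uparrow1$, using positivity and $\int\cp_r\,\di m=1$).

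The main obstacle I expect is simply verifying the dominated-type convergence argument in the last step carefully, making sure the Bochner integral of a vector-valued function against a scalar approximate identity indeed converges in $L^p$; but this is routine once the bound $\|\r^{(p,0)}_{\bf w}\|_{L^p\to L^p}\leq 1$ from Proposition~\ref{pinccz} and the pointwise continuity from Proposition~\ref{pisy} are in hand. No new ingredients are required beyond those already used for the Fejer case.
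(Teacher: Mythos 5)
Your proposal is correct and follows essentially the same route as the paper: the paper's proof likewise establishes the transference identity with the Poisson kernel by testing against the total set $\{j_{\infty,q}(u^*_{rs})\}$ exactly as in Theorem \ref{fece}, and then concludes from the approximate-identity property of $P_r$ together with Proposition \ref{pisy}. The only (cosmetic) issue is your reuse of the letter $r$ both for the Poisson parameter and for the test index in $u^*_{rs}$, which you should rename to avoid expressions like $\widehat{\cp_r}(r)$.
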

\begin{proof}
We note that, as before,
\begin{align*}
&\sum_{k,l\in\bz}r^{|k|+|l|}\widehat{x}(k,l)i_{\infty,p}(u_{kl})\\
=&\int_{\bt^2}\di m(w_1)\di m(w_2)P_r(w_1)P_r(w_2)\r^{(p,0)}_{w_1,w_2}(x),
\end{align*}
where $P_r$, $0\leq r<1$ is the Poisson kernel, which is again an approximate identity on $\bt$. The assertion follows again by Proposition \ref{pisy}.
\end{proof}

\section{an alternative definition of the fourier transform}
\label{altfou}

\noindent

Even if it is still unclear how to construct a one-parameter group of Fourier transforms, one for each embedding in \eqref{em10} of $M$ in its predual $M_*$, we are able to provide an alternative definition of the Fourier transform, which is essentially associated to the right embedding 
$$
M\ni a\to j_{\infty,1}(a)=R_a\in M_*,
$$ 
in the following way. 
\begin{defin}
For $x\in M_*=L_1^1(M)$ of the form $R_a$, we define the sequence $\wideparen{x}(k,l)$ of the Fourier coefficients as
\begin{equation}
\label{forrztf}
\wideparen{x}(k,l):=\om(au_{kl}),\quad k,l\in\bz.
\end{equation}
\end{defin}
It is immediate to check that, for $a\in M$ we get
\begin{align*}
\wideparen{L_a}(k,l)&=\om(u_{kl}a)\neq\om(u^*_{kl}a)=\widehat{L_a}(k,l),\\
\widehat{R_a}(k,l)&=\om(au^*_{kl})\neq\om(au_{kl})=\wideparen{R_a}(k,l).
\end{align*}
This simply means that, for general $f\in M_*=L^1_0(M)=L^1_1(M)$ and $k,l\in\bz$, we get
$$
\widehat{f}(k,l)=f(u^*_{kl})\neq f(u_{kl})=\wideparen{f}(k,l).
$$
For $p=2$, suppose for simplicity that $\xi\in\cd_{\D_\om^{1/2}}$. Then 
$$
L^1_1(M)\ni x_\xi:=j_{2,1}(\xi)=\langle\,{\bf\cdot}\,\D_\om^{1/2}\xi,\xi_\om\rangle.
$$
Therefore, for $k,l\in\bz$ we obtain by $J_\om \D^{1/2}_\om=\D^{-1/2}_\om J_\om$ ({\it cf.} \eqref{tomis}),
\begin{equation}
\label{1le2}
\wideparen{x_\xi}(k,l)=\langle u_{kl}\D_\om^{1/2}\xi,\xi_\om\rangle=\langle\xi,J_\om e^{kl}\rangle,
\end{equation}
which is meaningful for all $\xi\in\ch_\om$.

By putting 
$$
\eeps^{kl}:=J_\om e^{kl},\quad k,l\in\bz,
$$
we note that the $\eeps^{kl}$ still provide an orthonormal basis for $\ch_\om$ as the conjugation $J_\om$ is an anti-unitary involution. For the Fourier transform \eqref{1le2} of a generic vector
$\xi\in\ch_\om$, we put
$$
\wideparen{\xi}(k,l):=\langle\xi,\eeps^{kl}\rangle,\quad  k,l\in\bz.
$$
The corresponding Fourier anti-transform 
$$
f\in\ell^1(\bz^2)\to\breve{f}\in M=L^\infty_1(M),
$$
that is the analogous of \eqref{aft1}, is defined as
\begin{equation}
\label{1fao2}
\breve{f}:=\sum_{k,l\in\bz}f(k,l)u^*_{kl}.
\end{equation}
By taking into account
$$
L^\infty_1(M)=M\ni a\hookrightarrow\D_\om^{1/2}a\xi_\om\in\ch_\om=L^2_1(M)
$$
and $J_\om \D^{1/2}_\om=\D^{-1/2}_\om J_\om$ as before, for the Fourier anti-transform 
$$
f\in\ell^2(\bz^2)\to\breve{f}\in L^2_1(M),
$$
we obtain
$$
\breve{f}=\sum_{k,l\in\bz}f(k,l)\D_\om^{1/2}u^*_{kl}\xi_\om=\sum_{k,l\in\bz}f(k,l)\eeps^{kl},\,\,k,l\in\bz.
$$
Therefore, the Fourier anti-transform $\!\breve{}$ is also a (complete) unitary equivalence at level of Hilbert spaces as expected. 

From the last consideration, compared with the analogous one \eqref{for201}, we can recognise why $\,\widehat{{}}\,$ and $\,\wideparen{}\,$ are associated to the left and right embedding of $M$ in $M_*$, respectively. 

As noticed before, it is still unclear how we can interpolate between both definitions to obtain a one-parameter of Fourier transforms associated to the other embeddings $\iota^\th_{\infty,1}:M\hookrightarrow M_*$, $\th\in(0,1)$.

Also for this case of the Fourier transform $\,\wideparen{}\,$ and the corresponding anti-transform $\,\breve{}\,\,$, the Riemann-Lebesgue Lemma ({\it cf.} Theorem \ref{rlcz}), the Hausdorff-Young inequality ({\it cf.} Theorem \ref{hy}), and finally Theorem \ref{hy1} hold true. We leave the details of the proofs to the reader. 

The corresponding results analogous to Theorem \ref{fece} and Theorem \ref{abt1} hold true
as well. For this situation, we report the crucial details of the proof.
\begin{thm}
Let $1\leq p\leq 2$ and $x\in L_0^p(M)$. Then 
\begin{align*}
L^p\!\!-\!\!\lim_{N\to+\infty}\sum_{|k|,|l|\leq N}&\bigg(1-\frac{|k|}{N+1}\bigg)\bigg(1-\frac{|l|}{N+1}\bigg)\wideparen{x}(k,l)j_{\infty,p}(u^*_{kl})=x,\\
L^p\!\!-\!\!\lim_{r\uparrow1}\sum_{k,l\in\bz}&r^{|k|+|l|}\wideparen{x}(k,l)j_{\infty,p}(u^*_{kl})=x.
\end{align*}
\end{thm}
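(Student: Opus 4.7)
The plan is to mirror the proof of Theorem \ref{fece} (for the Fejer sum) and of Theorem \ref{abt1} (for the Abel mean), replacing systematically the left-embedding Fourier transform $\widehat{{}}$ by the right-embedding one $\wideparen{{}}$, the left embedding $i_{\infty,p}$ by the right embedding $j_{\infty,p}$, and the contractions $\r^{(p,0)}_{\bf w}$ obtained from the left interpolation scheme by the corresponding right contractions $\r^{(p,1)}_{\bf w}$. The generators $u_{kl}$ are replaced by their adjoints $u_{kl}^*$ in the Fejer/Abel expansion, consistently with \eqref{1fao2}.

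First I would establish the right analogue of Proposition \ref{pinccz}: the $*$-automorphisms $\r_{\bf w}$ of $M$ extend via the pre-transpose on $M_*=L^1_1(M)$ and by complex interpolation to contractions $\r^{(p,1)}_{\bf w}:L^p_1(M)\to L^p_1(M)$, intertwined by the embeddings $j_{q,p}$. Next, I would prove the right-embedding version of Lemma \ref{wts}:
$$
\wideparen{\r^{(p,1)}_{\bf w}(x)}(k,l)=w_1^{-l}w_2^{-k}\,\wideparen{x}(k,l),\quad k,l\in\bz.
$$
By the same $3\eps$-approximation argument used for $\widehat{{}}\,$, it suffices to check this for $x=R_a=j_{\infty,1}(a)$, $a\in M$; a direct computation using the relation $\r_{\bf w}(u_{kl})\xi_\om=w_1^{l}w_2^{k}u_{kl}\xi_\om$ from \eqref{trcfi}, together with the fact that $\xi_\om$ is separating for $M$ (so $\r_{\bf w}(u_{kl})=w_1^{l}w_2^{k}u_{kl}$ as operators), yields the claim.

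Third, I would establish the right analogue of Proposition \ref{pisy}, namely
$L^p\text{-}\lim_{{\bf w}\to{\bf 1}}\r^{(p,1)}_{\bf w}(x)=x$ for $x\in L^p_1(M)$ and $p\in[1,2]$. For $p=2$ this reduces, via the identification $L^2_1(M)=\ch_\om$, to a direct application of the Lebesgue dominated convergence theorem in $\oplus_\bz L^2(\bt,m)$; for $p\in[1,2)$ one uses density of $j_{2,p}(L^2_1(M))$ in $L^p_1(M)$ together with the uniform contractivity of $\r^{(p,1)}_{\bf w}$, and the Hölder inequality exactly as in the proof of Proposition \ref{pisy}.

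Fourth, and this is the technical heart of the argument, I would show the integral representation
$$
\sum_{|k|,|l|\leq N}\!\!\bigg(1-\tfrac{|k|}{N+1}\bigg)\!\bigg(1-\tfrac{|l|}{N+1}\bigg)\wideparen{x}(k,l)j_{\infty,p}(u^*_{kl})
=\!\int_{\bt^2}\!\F_N(w_1)\F_N(w_2)\r^{(p,1)}_{\bf w}(x)\,\di m(w_1)\di m(w_2),
$$
and analogously with $P_r(w_1)P_r(w_2)$ in place of $\F_N(w_1)\F_N(w_2)$ for the Abel case. The identity is verified by pairing both sides with the total set $\{i_{\infty,q}(u_{rs})\mid r,s\in\bz\}$ of the dual $L^q_0(M)$ of $L^p_1(M)$ (recall that $L^p_\th(M)'\simeq L^q_{1-\th}(M)$). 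On the right-hand side, the previous step yields
$$
\int_{\bt^2}\F_N(w_1)\F_N(w_2)w_1^{-s}w_2^{-r}\,\di m\otimes\di m\cdot \wideparen{x}(r,s)
=\widehat{\F_N}(s)\widehat{\F_N}(r)\wideparen{x}(r,s),
$$
which matches the pairing of the left-hand side with $i_{\infty,q}(u_{rs})$ in view of $\om(u_{rs}^*u_{kl})=\d_{k,r}\d_{l,s}$ from \eqref{ortho}. The conclusion then follows from the approximate-identity property of the Fejer kernel $\F_N$ (respectively, the Poisson kernel $P_r$) on $\bt$ combined with the strong continuity established in step three, exactly as in the final lines of Theorem \ref{fece} and Theorem \ref{abt1}.

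The principal obstacle is the duality bookkeeping in step four: one must be careful that the pairing $L^p_1(M)\times L^q_0(M)\to\bc$ is the one induced by the canonical pairing $M\times M_*\to\bc$ via interpolation, and that the total set $\{i_{\infty,q}(u_{rs})\}$ indeed separates points. Once this is settled, all remaining computations reduce to the scalar Fourier calculus on $\bt^2$, and the Poisson case requires no modification beyond substituting $P_r$ for $\F_N$.
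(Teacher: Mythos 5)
Your proposal is correct and follows essentially the same route as the paper's proof: transference via the automorphisms $\r_{\bf w}$ extended to the right $L^p$-scale, the intertwining relation for $\,\wideparen{}\,$, the pairing against the total set $\{i_{\infty,q}(u_{rs})\}\subset L^q_0(M)\sim L^p_1(M)'$, and the approximate-identity property of the Fejer and Poisson kernels combined with the right-embedding analogue of Proposition \ref{pisy}. The only (harmless) divergence is a sign convention: the paper's computation gives $\wideparen{\r^{(p,1)}_{w_1,w_2}(x)}(r,s)=w_1^{s}w_2^{r}\,\wideparen{x}(r,s)$ and accordingly places $\r^{(p,1)}_{w_1^{-1},w_2^{-1}}$ under the integral, whereas your version with $w_1^{-l}w_2^{-k}$ and $\r^{(p,1)}_{\bf w}$ amounts to the substitution ${\bf w}\mapsto{\bf w}^{-1}$, under which $\F_N$ and $P_r$ are invariant.
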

\begin{proof}
With $\r^{(p,1)}_{\bf w}:L^p_1(M)\to L^p_1(M)$ the extension of the $*$-automorphism in \eqref{trans} to the right $L^p$-spaces, by \eqref{trcfi} we check for $x=j_{\infty,p}(a)$,
\begin{align*}
\r^{(p,1)}_{w_1,w_2}(x)\big(i_{\infty,q}(u_{rs})\big)=&\wideparen{\r^{(p,1)}_{w_1,w_2}(x)}(r,s)
=\langle a\r_{\bf w}(u_{rs})\xi_\om,\xi_\om\rangle\\
=&w_1^{s}w_2^{r}\langle au_{rs}\xi_\om,\xi_\om\rangle
=w_1^{s}w_2^{r}\wideparen{x}(r,s).
\end{align*}
Therefore, by a standard 3$\eps$-argument, we argue that for a generic $x\in L^p_1(M)$,
\begin{align*}
&\sum_{|k|,|l|\leq N}\bigg(1-\frac{|k|}{N+1}\bigg)\bigg(1-\frac{|l|}{N+1}\bigg)\wideparen{x}(k,l)j_{\infty,p}(u^*_{kl})\\
=&\int_{\bt^2}\di m(w_1)\di m(w_2)\F_N(w_1)\F_N(w_2)\r^{(p,1)}_{w^{-1}_1,w^{-1}_2}(x),\\
&\sum_{k,l\in\bz}r^{|k|+|l|}\wideparen{x}(k,l)j_{\infty,p}(u^*_{kl})\\
=&\int_{\bt^2}\di m(w_1)\di m(w_2)P_r(w_1)P_r(w_2)\r^{(p,1)}_{w^{-1}_1,w^{-1}_2}(x).
\end{align*}
Then the assertions follow analogously to those of the corresponding results in the previous section.
\end{proof}
We compare the definitions of the Fourier transforms in the present paper with the original ones concerning the situation associated to the canonical trace, and for the commutative situation $C(\bt^2)$ corresponding to the case when the deformation angle $\a$ is zero.

Suppose that $A\in\ba_{2\a}$. By taking into account our definitions of the Fourier transform and that in \eqref{fzocf}, for $x:=L_{\pi_\t(A)}=R_{\pi_\t(A)}\in\pi_\t(\ba_{2a})''_*$ we get
\begin{align*}
\widehat{x}(k,l)=&e^{2\pi\imath\a kl}\t\big(W(-l,-k)A\big),\\
\wideparen{x}(k,l)=&e^{-2\pi\imath\a kl}\t\big(W(l,k)A\big),
\end{align*}
where the Weyl operators $W(m,n)$ are defined in \eqref{wcea}.

The classical case corresponds to $\a=0$, that is $\ba_0\sim C(\bt^2)$, and the underlying measure corresponds to the Haar one $m\times m=\frac{\di\th_1\di\th_2}{4\pi^2}$. In this situation, each element in $L^1(M)$ uniquely corresponds to a (equivalence class of) function $f$ in $L^1(\bt^2,m\times m)$:
$$
L^1(M)\ni x\equiv f_x\in L^1(\bt^2,m\times m)\sim L^1(M).
$$ 
Therefore, we get
\begin{align*}
\widehat{x}(k,l)=&\int_{\bt^2}f_x\big(e^{\imath\th_1},e^{\imath\th_2}\big)e^{-\imath(l\th_1+k\th_2)}\frac{\di\th_1\di\th_2}{4\pi^2}=\widehat{f_x}(l,k),\\
\wideparen{x}(k,l)=&\int_{\bt^2}f_x\big(e^{\imath\th_1},e^{\imath\th_2}\big)e^{\imath(l\th_1+k\th_2)}\frac{\di\th_1\di\th_2}{4\pi^2}=\widecheck{f_x}(l,k),
\end{align*}
where $\widehat{f}$ and $\widecheck{f}$ denote the classical Fourier transform and anti-transform of $f\in L^1(\bt^2,m\times m)$.

We end the present section by noticing the following fact arising by algebraic similarities. Even if there is no reasonable motivation to consider $\ba_{2\a}$ as a group-like quantum object with 
$\bz^2$ as its "dual object", we can consider the map \eqref{1fao2} as a Fourier transform from $\ell^1(\bz^2)$ to $M$, whose corresponding anti-transform $M_*\to\ell^\infty(\bz^2)$ is given by \eqref{forrztf}. This alternative way to see the emerging situation does not cause any trouble.

\section{deformed dirac operators and modular spectral triples}

\noindent

The present section is devoted to defining a one-parameter family of modular spectral triples for each $\eta\in[0,1]$, extending the ones defined in Section 9 of \cite{FS}. As an application of the previous analysis, we show how the Fourier transforms defined here, allow us to "diagonalise" the corresponding Dirac operators $\mathrm{D}^{(\eta)}$, $\eta=\{0,1/2,1\}$.

For a fixed diffeomorphism $\mathpzc{f}$ of the unit circle $\bt$ with growth sequence $\G_n(\mathpzc{f})$ as in Section \ref{2dixf}, we start with an undeformed Dirac operator
by putting
$$
\mathrm{D}_n=\begin{pmatrix} 
	 0 &\mathrm{L}_n\\
	\mathrm{L}_n^*& 0\\
     \end{pmatrix}:=\begin{pmatrix} 
	 0 &\left(\imath z\frac{\di\,\,}{\di z}-a_nI\right)\\
	\left(-\imath z\frac{\di\,\,}{\di z}-a_nI\right)& 0\\
     \end{pmatrix},
$$
with $a_0=0$ and
$$
a_n:=\sign(n)\sum_{l=1}^{|n|}\frac1{\G_{l-\frac{1-\sign(n)}2}(\mathpzc{f})},\quad n\in\bz\backslash\{0\}.
$$
Here, 
$$
\sign(n):=\left\{\!\!\!\begin{array}{ll}
                      -1\!\!&\text{if}\,\, n<0,\\
                     \,\,\,\,\,1\!\!&\text{if}\,\, n\geq0,
                    \end{array}
                    \right.
$$
and $\mathrm{L}=\bigoplus_{n\in\bz}\left(\imath z\frac{\di\,\,}{\di z}-a_nI\right)$ is precisely the operator appearing in Definition \ref{fmst} of the modular spectral triple.
To simplify notation, we drop the subscript "$\mathrm{L}$" in the foregoing analysis.

Consider the Sobolev-Hilbert space
$$
H^1(\bt):=\big\{f\in AC(\bt)\mid f'\in L^2(\bt,m)\big\},
$$ 
where $AC(\bt)$ denotes the set of all absolutely continuous complex valued functions on the unit circle.

For $\eta\in[0,1]$ and $\d_n$ as in \eqref{rcanz}, 
we can define deformed Dirac Operators as
\begin{align*}
\mathrm{D}^{(\eta)}=&\bigoplus_{n\in\bz}\begin{pmatrix} 
	 0 &M_{\d_n^{\eta-1}}\mathrm{L}_nM_{\d_n^{-\eta}}\\
	M_{\d_n^{-\eta}}\mathrm{L}_n^*M_{\d_n^{\eta-1}}& 0\\
     \end{pmatrix}\\
     =&\bigoplus_{n\in\bz}\mathrm{D}^{(\eta)}_n
    =\begin{pmatrix} 
	 0 &\D_\om^{\eta-1}\mathrm{L}\D_\om^{-\eta}\\
	\D_\om^{-\eta}\mathrm{L}^*\D_\om^{\eta-1}& 0\\
     \end{pmatrix},\\
\end{align*}
on their natural domain
$$
\cd:=\bigg\{\xi\in\bigoplus_{n\in\bz}H^1(\bt)\oplus H^1(\bt)\mid\sum_{n\in\bz}\|\mathrm{D}^{(\eta)}_n\xi_n\|^2<+\infty\bigg\}.
$$
As explained in \cite{FS}, each $\mathrm{D}^{(\eta)}_n$ is invertible with bounded inverse, after defining for $n=0$,
$$
(\mathrm{D}^{(\eta)}_0)^{-1}:=(\mathrm{D}^{(\eta)}_0)^{-1}P^\perp_{\ker\left(\mathrm{D}^{(\eta)}_0\right)}.
$$
In addition, each $(\mathrm{D}^{(\eta)}_n)^{-1}$ is compact. 

By following the lines of Theorem 9.1 of \cite{FS}, we get
\begin{prop}
If $\G_n(\mathpzc{f})=o(\ln n)$, then for each $\eta\in[0,1]$, $\mathrm{D}^{(\eta)}$ has compact resolvent.
\end{prop}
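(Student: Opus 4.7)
The approach I would take exploits the block-diagonal structure $\mathrm{D}^{(\eta)} = \bigoplus_{n\in\bz} \mathrm{D}^{(\eta)}_n$ together with the standard fact that a direct sum $\bigoplus_n K_n$ of compact operators is itself compact if and only if $\|K_n\|\to 0$ as $|n|\to\infty$. Since the previous analysis has already established that each $(\mathrm{D}^{(\eta)}_n)^{-1}$ is compact (with the Moore--Penrose convention at $n=0$), the heart of the proof lies in establishing the decay
$$
\|(\mathrm{D}^{(\eta)}_n)^{-1}\| \longrightarrow 0,\qquad |n|\to\infty,
$$
under the hypothesis $\G_n(\mathpzc{f})=o(\ln n)$.

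The first step is to estimate each block. Inverting the off-diagonal block matrix yields entries of the form $M_{\d_n^{\eta}}\mathrm{L}_n^{-1}M_{\d_n^{1-\eta}}$ and its adjoint analogue, so
$$
\|(\mathrm{D}^{(\eta)}_n)^{-1}\| \leq \|M_{\d_n^{\eta}}\|\,\|\mathrm{L}_n^{-1}\|\,\|M_{\d_n^{1-\eta}}\|.
$$
Since $\d_n>0$ a.e. and both exponents $\eta,1-\eta$ lie in $[0,1]$, the multiplication-operator norms are $\|\d_n\|_\infty^{\eta}$ and $\|\d_n\|_\infty^{1-\eta}$ respectively, whose product is simply $\|\d_n\|_\infty$. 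Using formula \eqref{rcanz} one has $\|\d_n\|_\infty = \|D\mathpzc{f}^n\|_\infty\leq \G_{|n|}(\mathpzc{f})$ for every $n\in\bz$.

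The second step is the spectral analysis of $\mathrm{L}_n = \imath z\frac{\di}{\di z}-a_n I$ on $L^2(\bt,m)$. In the trigonometric basis $\{z^k\}_{k\in\bz}$ the operator acts diagonally with eigenvalues $\imath k - a_n$, so the singular values are $\sqrt{k^2+a_n^2}$, minimised at $k=0$. Hence $\|\mathrm{L}_n^{-1}\|=1/|a_n|$ for $n\neq 0$, and combining with the previous step,
$$
\|(\mathrm{D}^{(\eta)}_n)^{-1}\| \leq \frac{\G_{|n|}(\mathpzc{f})}{|a_n|}.
$$
The third step analyses $|a_n|=\sum_{l=1}^{|n|}1/\G_l(\mathpzc{f})$. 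Fixing $\eps>0$, the hypothesis $\G_l=o(\ln l)$ yields $1/\G_l > 1/(\eps\ln l)$ for $l$ large enough, and the integral test gives $|a_n| \geq C_\eps\,n/\ln n$ for $|n|$ large. Therefore
$$
\frac{\G_{|n|}(\mathpzc{f})}{|a_n|} \leq \frac{\G_{|n|}(\mathpzc{f})\,\ln n}{C_\eps\, n} = \frac{o((\ln n)^2)}{n} \longrightarrow 0,
$$
since $\G_n=o(\ln n)$ forces $\G_n\ln n = o((\ln n)^2)=o(n)$. The isolated $n=0$ summand is a fixed compact operator and hence harmless, and the direct-sum compactness criterion delivers the conclusion.

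The main obstacle I expect is not the asymptotic analysis (which is elementary once the factorisation is in hand) but rather the careful justification of the identity $\|\mathrm{L}_n^{-1}\|=1/|a_n|$ and the composition $(M_{\d_n^{\eta-1}}\mathrm{L}_n M_{\d_n^{-\eta}})^{-1}=M_{\d_n^{\eta}}\mathrm{L}_n^{-1}M_{\d_n^{1-\eta}}$ at the level of unbounded operators on $H^1(\bt)$; here one needs that both $\d_n^{\pm1}\in L^\infty(\bt,m)$, which follows from $\mathpzc{f}$ being a $C^\infty$-diffeomorphism, so that the multiplication operators preserve $H^1(\bt)$ and the inversion is rigorous on the prescribed domain.
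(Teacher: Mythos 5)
Your proposal is correct and follows essentially the same route as the paper: block-by-block inversion of $\mathrm{D}^{(\eta)}=\bigoplus_n\mathrm{D}^{(\eta)}_n$, the estimate $\|(\mathrm{D}^{(\eta)}_n)^{-1}\|\leq\G_{|n|}(\mathpzc{f})\|\mathrm{D}_n^{-1}\|=\G_{|n|}(\mathpzc{f})/|a_n|$, and the observation that $\G_n=o(\ln n)$ makes this ratio behave like $(\ln n)^2/n\to0$. The only difference is that you spell out the computations ($\|\d_n\|_\infty^{\eta}\|\d_n\|_\infty^{1-\eta}=\|\d_n\|_\infty\leq\G_{|n|}(\mathpzc{f})$ and $\|\mathrm{L}_n^{-1}\|=1/|a_n|$ via the Fourier basis) that the paper delegates to Theorem 9.1 of \cite{FS}.
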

\begin{proof}
For the inverses $(\mathrm{D}^{(\eta)}_n)^{-1}$, which are all compact operators, we have for each $n\in\bz$,
$$
(\mathrm{D}^{(\eta)}_n)^{-1}=\begin{pmatrix} 
	 0 &M_{\d_n^{1-\eta}}(\mathrm{L}^*_n)^{-1}M_{\d_n^{\eta}}\\
	M_{\d_n^{\eta}}\mathrm{L}_n^{-1}M_{\d_n^{1-\eta}}& 0\\
     \end{pmatrix}.
$$
Suppose now $\G_n(\mathpzc{f})=o(\ln n)$. By following the same computations in the above mentioned theorem, we get for $|n|$ sufficiently large,
$$
\big\|(\mathrm{D}^{(\eta)}_n)^{-1}\big\|\leq\G_n(\mathpzc{f})\big\|\mathrm{D}_n^{-1}\big\|\leq\frac1{\frac1{\ln|n|}\sum_{l=2}^{|n|}\frac1{\ln l}}\to0
$$
as $n\to\infty$, and the assertion follows.
\end{proof}
We have shown in \cite{FS} that if $\mathpzc{f}$ is one of the diffeomorphisms constructed in \cite{M} for $\a$ a {\bf UL} number, then the $\mathrm{D}^{(\eta)}$ have compact resolvent. The same happens if $\a$ is Diophantine and $\mathpzc{f}$ is any diffeomorphism of the unit circle with $\r(\mathpzc{f})=2\a$.

In order to construct the associated modular spectral triples, we have to define the corresponding deformed commutators. For ${\bf t}:=(t_1,t_2)\in\br^2$, we define
$$
\S_{\bf t}(A):=
\begin{pmatrix} 
	 \s^{\om}_{t_1}(A)&0\\
	0&\s^{\om}_{t_2}(A)\\
     \end{pmatrix}.
$$
To simplify the analysis, we first suppose that $A\in M_\infty$, the set of the entire elements of $M$, under the action of the modular group. For such operators and for $\eta\in[0,1]$, we define ${\bf z}(\eta):=-\imath(1-\eta, \eta)$ and put
$$
\cd^{(\eta)}(A)\equiv\imath\big[\mathrm{D}^{(\eta)}, A\big]_{(\eta)}:=\imath\big(\mathrm{D}^{(\eta)}\S_{{\bf z}(\eta)}(A)-\S_{-{\bf z}(\eta)}(A)\mathrm{D}^{(\eta)}\big).
$$
A straightforward computation yields
\begin{equation}
\label{spetr112}
\cd^{(\eta)}(A)=\imath
\begin{pmatrix} 
	0&\D_\om^{\eta-1}[\mathrm{L},A]\D_\om^{-\eta}\\
	\D_\om^{-\eta}[\mathrm{L}^*,A]\D_\om^{\eta-1}& 0\\
     \end{pmatrix}.
\end{equation}
Notice that $\cd^{(\eta)}(A)$ can be meaningful even if $A$ is not an analytic element. In the sequel, we assume \eqref{spetr112} as the definition of the deformed derivation for each $\eta\in[0,1]$. 

Now we show that the deformed derivation in \eqref{spetr112} provides sufficiently many bounded operators.
\begin{prop}
Let $a\in\ba^o_{2\a}$. Then $\cd^{(\eta)}(\pi_\om(a))$ defines a bounded operator acting on $\ch_\om\oplus\ch_\om$.
\end{prop}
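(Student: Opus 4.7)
By linearity and the block structure, it suffices to bound each off-diagonal entry separately; exchanging $\eta\leftrightarrow 1-\eta$ relates the two entries (modulo the sign of the first-order part in $\mathrm{L}^*=-\imath zD-\tilde a$), so I focus on $B:=\D_\om^{\eta-1}[\mathrm{L},\pi_\om(a)]\D_\om^{-\eta}$. First I would write $a=W(g)$ with $g$ finitely supported and expand $\pi_\om(a)$ in its crossed-product form $\pi_\om(a)=\sum_{l\in\bz}\pi_o(\mathcal{H}_l)\l^l$; here, up to a rotation by a constant angle that does not depend on the component index $n$, one has $\mathcal{H}_l=\widecheck{g^{(l)}}\circ R^{-l}\circ\mathpzc{h}_\mathpzc{f}^{-1}$, so that the action of $\pi_o(\mathcal{H}_l)\l^l$ at component $n$ is multiplication by $\mathcal{H}_l\circ\mathpzc{f}^n$ followed by shift by $l$. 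The seminorms $\r_{k,0}$ and $\r_{k,1}$ then control $\|\mathcal{H}_l\|_\infty$ and $\|D\mathcal{H}_l\|_\infty$ with arbitrary polynomial decay in $|l|$.

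A direct Leibniz computation gives
\begin{equation*}
\bigl([\mathrm{L},\pi_o(\mathcal{H}_l)\l^l]\xi\bigr)_n
=\bigl[\imath z\,D(\mathcal{H}_l\circ\mathpzc{f}^n)+(a_{n-l}-a_n)(\mathcal{H}_l\circ\mathpzc{f}^n)\bigr](z)\,\xi_{n-l}(z),
\end{equation*}
so the twisted $l$-th summand of $B$ acts on $\xi_{n-l}$ as multiplication by
\begin{equation*}
\d_n(z)^{\eta-1}\bigl[\imath z\,D(\mathcal{H}_l\circ\mathpzc{f}^n)(z)+(a_{n-l}-a_n)(\mathcal{H}_l\circ\mathpzc{f}^n)(z)\bigr]\d_{n-l}(z)^{-\eta}.
\end{equation*}
Next, applying the chain rule and the identity $D\mathpzc{f}^n(z)=\mathpzc{f}^n(z)\d_n(z)/z$ rewrites the derivative term as $\imath\mathpzc{f}^n(D\mathcal{H}_l)(\mathpzc{f}^n\,\cdot\,)\,\d_n$; the cocycle identity $\d_n=(\d_l\circ\mathpzc{f}^{n-l})\cdot\d_{n-l}$ (immediate from $m\circ\mathpzc{f}^{(n-l)+l}=(m\circ\mathpzc{f}^{n-l})\circ\mathpzc{f}^l$) then yields the crucial collapses $\d_n^\eta\d_{n-l}^{-\eta}=(\d_l\circ\mathpzc{f}^{n-l})^\eta$ and $\d_n^{\eta-1}\d_{n-l}^{-\eta}=\d_n^{-1}(\d_l\circ\mathpzc{f}^{n-l})^\eta$. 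In particular $|\d_l\circ\mathpzc{f}^{n-l}|\leq\G_l(\mathpzc{f})$ uniformly in $n$, so the contribution of the derivative term to the twisted multiplier is bounded in sup-norm by $\G_l(\mathpzc{f})^\eta\|D\mathcal{H}_l\|_\infty$, uniformly in $n$.

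For the $(a_{n-l}-a_n)$ term one similarly gets
\begin{equation*}
\bigl|(a_{n-l}-a_n)\d_n^{\eta-1}\d_{n-l}^{-\eta}(\mathcal{H}_l\circ\mathpzc{f}^n)\bigr|\leq|a_n-a_{n-l}|\cdot|\d_n^{-1}|\cdot\G_l(\mathpzc{f})^\eta\|\mathcal{H}_l\|_\infty.
\end{equation*}
Here one uses $|1/\d_n(z)|=|\d_{-n}(\mathpzc{f}^n z)|\leq\G_n(\mathpzc{f})$ coming from the cocycle and the definition of the growth sequence. The point of the specific choice $a_n=\sign(n)\sum_{k=1}^{|n|}1/\G_{k-(1-\sign(n))/2}(\mathpzc{f})$ is precisely that $a_n$ is a telescoping sum of terms $1/\G_k$ matched exactly with the upper bound on $1/|\d_n|$, giving $|a_n-a_{n-l}|\cdot\G_n(\mathpzc{f})=O(l)$ uniformly in $n$; in all the settings considered (Diophantine, \textbf{L}-, or \textbf{UL}-number $\a$) the ratio $\G_n/\G_{n-l+1}$ stays bounded for fixed $l$ because $\G_n$ is sub-exponential, in fact only $o(\ln n)$ in the \textbf{UL} case. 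Consequently the operator norm of the $l$-th summand of $B$ is bounded by $C\bigl(l\,\G_l(\mathpzc{f})^\eta\|\mathcal{H}_l\|_\infty+\G_l(\mathpzc{f})^\eta\|D\mathcal{H}_l\|_\infty\bigr)$.

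Summing over $l$, membership $a\in\ba_{2\a}^o$ guarantees faster-than-polynomial decay of $\|\mathcal{H}_l\|_\infty$ and $\|D\mathcal{H}_l\|_\infty$; since $\G_l(\mathpzc{f})$ grows at most logarithmically in the \textbf{UL} case (and is bounded in the Diophantine case), the series converges absolutely and yields a finite bound on $\|B\|$. The analogous estimate for the lower off-diagonal entry $\D_\om^{-\eta}[\mathrm{L}^*,\pi_\om(a)]\D_\om^{\eta-1}$ proceeds identically after the substitution $\eta\mapsto 1-\eta$ and the sign change in the derivative piece of $\mathrm{L}^*$, completing the proof. The principal obstacle I expect is the second term: making rigorous the uniform-in-$n$ estimate $|a_n-a_{n-l}|\cdot|1/\d_n(z)|=O(l)$, which forces one to exploit in a non-trivial way both the delicate cocycle identity and the tailoring of $a_n$ to the growth sequence, and to handle separately the cases where the range $[n-l+1,n]$ straddles $0$ so that the sign-offset in the definition of $a_n$ enters; once this estimate is in place, the convergence of the $l$-series is essentially automatic from the seminorm decay.
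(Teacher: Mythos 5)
Your route is genuinely different from the paper's. The paper does not expand $a$ in modes at all: it invokes the twisted Leibniz rule for $\cd^{(\eta)}$ to reduce the whole statement to the generators of $\ba^o_{2\a}$ — in essence to the single shift $\l$ and its inverse — computes $\big\|\big(\D_\om^{\eta-1}[\mathrm{L},\l]\D_\om^{-\eta}g\big)_n\big\|\leq|a_{n-1}-a_n|\,\G_{|n|}(\mathpzc{f})^{1-\eta}\G_{|n-1|}(\mathpzc{f})^{\eta}\|g\|$, and concludes from the exact identity $|a_{n-1}-a_n|=1/\G_{|n|}(\mathpzc{f})$ together with $\G_{l}/\G_{l\pm1}\leq\G_1$, deferring the summation/approximation details to Sections 9 and 11 of \cite{FS}. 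Your mode-by-mode estimate of $\pi_o(\mathcal{H}_l)\l^l$ with summation over $l$ controlled by the seminorms $\r_{k,0},\r_{k,1}$ is a legitimate alternative, and arguably more self-contained; your identification of $\mathcal{H}_l$ with the functions entering the seminorms, the commutator formula, and the cocycle collapse $\d_n^{\eta}\d_{n-l}^{-\eta}=(\d_l\circ\mathpzc{f}^{n-l})^{\eta}$ are all correct.

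The one step that does not hold as stated is the uniform bound $|a_n-a_{n-l}|\cdot|1/\d_n|=O(l)$. Writing (say for $0<l\leq n$) $|a_n-a_{n-l}|\,\G_n=\sum_{k=n-l+1}^{n}\G_n/\G_k$, the only available control on the ratios is submultiplicativity $\G_n\leq\G_{n-k}\G_k$, giving $\G_n/\G_k\leq\G_{n-k}$ and hence the uniform-in-$n$ bound $\sum_{j=0}^{l-1}\G_j(\mathpzc{f})=O\big(l\max_{j<l}\G_j(\mathpzc{f})\big)$, not $O(l)$; your justification ("the ratio $\G_n/\G_{n-l+1}$ stays bounded for fixed $l$") conflates boundedness at fixed $l$ with a bound that is uniform, or linear, in $l$. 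This is a quantitative slip rather than a fatal one: the corrected per-mode bound carries an extra factor $\max_{j\le l}\G_j(\mathpzc{f})$, which grows at most like $o(l)$ (and $o(\ln l)$ in the \textbf{UL} case) and is therefore still absorbed by the faster-than-polynomial decay of $\|\mathcal{H}_l\|_\infty$ coming from the seminorms. You should also state explicitly which growth regime of $\G_n(\mathpzc{f})$ you are assuming when you sum the series, since the convergence genuinely uses it, whereas the paper's generator-by-generator estimate is uniform in $n$ without any growth hypothesis.
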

\begin{proof}
We sketch the proof, and refer the reader to Section 9 and 11 of \cite{FS} for further details. We first note that it is enough to check the assertion for the generators of 
$\ba^o_{2\a}$, and in particular for the one step shift $\l$ and its inverse $\l^{-1}$. By considering the adjoints and all $\eta\in[0,1]$, it is enough to check the boundedness for the following case, obtaining
$$
\left\|\left(\D_\om^{\eta-1}[\mathrm{L},\l]\D_\om^{-\eta}g\right)_n\right\|
\leq|a_{n-1}-a_n|\G_{|n|}(\mathpzc{f})^{1-\eta}\G_{|n-1|}(\mathpzc{f})^{\eta}\|g\|.
$$ 
By taking into account that $|a_{n-1}-a_n|=1/\G_{|n|}(\mathpzc{f})$ and $\G_{l}(\mathpzc{f})/\G_{l\pm1}(\mathpzc{f})\leq\G_{1}(\mathpzc{f})$, the assertion follows.
\end{proof}
Notice that, in \cite{FS} it is shown that $\cd^{(\eta)}(\pi_\om(a))$ uniquely defines a bounded operator for elements $a$ in a dense $*$-subalgebra of $\ba_{2\a}$, and thus we can define (nontrivial) modular spectral triples associated to the twisted Dirac operator $\mathrm{D}^{(\eta)}$. Indeed, the modular spectral triple satisfying (i)-(iv) in Definition \ref{fmst} is given by 
$\big(\om_\m, \ba_{2\a}^o,\mathrm{L}\big)_\eta$, provided $\G_n(\mathpzc{f})=o(\ln n)$, which happens if $\mathpzc{f}$ is a diffeomorphism constructed according to Proposition 3.1 of \cite{FS} for a {\bf UL}-number $\a$, or if $\a$ is Diophantine. For the latter case, the representation $\pi_{\om_\m}$ always generates the type $\ty{II_1}$ hyperfinite factor $R$, whereas for the former the type is determined by the ratio set $r\big(R_{2\a},[\m]\big)$ (or equivalently by $r\big(\mathpzc{f},[m]\big)$) if it is not of type $\ty{II_1}$.
We also note that the $\cd^{(\eta)}$ define $*$-maps on their natural (common) domain:
$$
\cd^{(\eta)}(\pi_\om(a))^*=\cd^{(\eta)}(\pi_\om(a^*)),\quad \eta\in[0,1],\,\,a\in\ba^o_{2\a}.
$$
We now show how the previously defined Fourier transform in Section \ref{sec3} diagonalises $\mathrm{D}^{(\eta)}$ for $\eta=0,1$, while the one defined in Section \ref{altfou} diagonalises 
$\mathrm{D}^{(1/2)}$. Indeed, denote $\cf:\ch_\om\to\ell^2(\bz^2)$ one of the Fourier transforms defined above. Then 
$\big(\cf\oplus\cf\big)\mathrm{D}^{(\eta)}\big(\cf^{-1}\oplus\cf^{-1}\big)$ provides hermitian matrices 
$$
\big(\cf\oplus\cf\big)\mathrm{D}^{(\eta)}\big(\cf^{-1}\oplus\cf^{-1}\big)=
\begin{pmatrix} 
	0&A^{(\eta)}\\
	\big(A^{(\eta)}\big)^*& 0\\
     \end{pmatrix},
$$
where the $A^{(\eta)}=\big(A^{(\eta)}_{(k,l)(rs)}\big)_{k,l,r,s\in\bz}$ are (infinite) numerical matrices given, for $\eta=0,1/2,1$ and $k,l,r,s\in\bz$, by
\begin{align*}
A^{(0)}_{(k,l)(rs)}=&\langle\D_\om^{-1}\mathrm{L}e^{kl},e^{rs}\rangle\\
A^{(1)}_{(k,l)(rs)}=&\langle\mathrm{L}\D_\om^{-1}e^{kl},e^{rs}\rangle\\
A^{(1/2)}_{(k,l)(rs)}=&\langle\D_\om^{-1/2}\mathrm{L}\D_\om^{-1/2}\eeps^{kl},\eeps^{rs}\rangle.
\end{align*}     
It is then enough to compute the right upper corners $A^{(\eta)}$ of $\big(\cf\oplus\cf\big)\mathrm{D}^{(\eta)}\big(\cf^{-1}\oplus\cf^{-1}\big)$, obtaining
\begin{prop}
For $k,l,r,s\in\bz$, we have
\begin{align*}
\big\langle\D_\om^{-1}\mathrm{L}e^{kl},e^{rs}\big\rangle=&(\imath l-a_k)\widehat{1/\d_k}(s-l)\d_{k,r},\\
\big\langle\mathrm{L}\D_\om^{-1}e^{kl},e^{rs}\big\rangle=&(\imath s-a_k)\widehat{1/\d_k}(s-l)\d_{k,r},\\
\big\langle\D_\om^{-1/2}\mathrm{L}\D_\om^{-1/2}\eeps^{kl},\eeps^{rs}\big\rangle=&-\big(\imath l\d_{l,s}+a_{-k}\widehat{\d_{k}}(l-s)\big)\d_{k,r}.
\end{align*}
\end{prop}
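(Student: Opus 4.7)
The plan is to verify each of the three identities by direct componentwise computation, using the explicit formulas for $\mathrm{L}$, $J_\om$ and $\D_\om^{\pm 1/2}$ on the basis vectors $e^{kl}$, $\eeps^{kl}$, together with the cocycle identity
$$
\d_{m+n}(z)=\d_m(z)\d_n(\mathpzc{f}^m(z)),\quad m,n\in\bz,
$$
which in particular yields $\d_{-k}(z)\d_k(\mathpzc{f}^{-k}(z))=1$.

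For the first identity I would observe that $\imath z\frac{\di\,}{\di z}(z^l)=\imath lz^l$, so that $(\mathrm{L}e^{kl})_n(z)=(\imath l-a_k)z^l\d_{n,k}$. Multiplying componentwise by $\d_n(z)^{-1}$ and pairing with $e^{rs}$ collapses the sum to $n=r=k$ and leaves the scalar integral $\int_\bt \d_k(z)^{-1}z^{l-s}\di m(z)=\widehat{1/\d_k}(s-l)$. The second identity proceeds analogously, but with $\mathrm{L}$ applied to $\D_\om^{-1}e^{kl}$, whose $k$-th component is $\d_k(z)^{-1}z^l$. Rather than expanding the derivative by the product rule, it is cleaner to pair the result with $e^{rs}$ first and then integrate by parts on $\bt$ (noting that $\imath z\frac{\di\,}{\di z}=\frac{\di\,}{\di\th}$, so that no boundary terms arise from periodicity); the derivative is transferred onto $\overline{z^s}$, which converts the factor $\imath l$ into $\imath s$ while leaving the $-a_k$ contribution unchanged.

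The third identity is the main obstacle. The first step is to eliminate $J_\om$ from $\eeps^{kl}=J_\om e^{kl}$: by \eqref{tomis} applied to $f(x)=x^{-1/2}$, one has $\D_\om^{-1/2}J_\om=J_\om\D_\om^{1/2}$, and inserting the explicit formulas for $J_\om$ and $\D_\om^{1/2}$ and then invoking the cocycle $\d_{-k}(z)\d_k(\mathpzc{f}^{-k}(z))=1$ collapses the expression to the compact form
$$
(\D_\om^{-1/2}\eeps^{kl})_n(z)=\mathpzc{f}^{-k}(z)^{-l}\d_{n,-k}.
$$
Next I would apply $\mathrm{L}$, using the identity $\imath z\frac{\di\,}{\di z}\mathpzc{f}^{-k}(z)^{-l}=-\imath l\d_{-k}(z)\mathpzc{f}^{-k}(z)^{-l}$, which is immediate from the definition \eqref{rcanz} of $\d_n$, to obtain
$$
(\mathrm{L}\D_\om^{-1/2}\eeps^{kl})_{-k}(z)=-\bigl(\imath l\d_{-k}(z)+a_{-k}\bigr)\mathpzc{f}^{-k}(z)^{-l}.
$$

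Pairing with $\D_\om^{-1/2}\eeps^{rs}$ enforces $r=k$ and reduces the inner product to two scalar integrals, namely $\int_\bt\d_{-k}(z)\mathpzc{f}^{-k}(z)^{s-l}\di m(z)$ and $\int_\bt\mathpzc{f}^{-k}(z)^{s-l}\di m(z)$. Performing the change of variables $w=\mathpzc{f}^{-k}(z)$, for which $\di m(w)=\d_{-k}(z)\di m(z)$ and correspondingly $\di m(z)=\d_k(w)\di m(w)$ (again by the cocycle), turns these into $\int_\bt w^{s-l}\di m(w)=\d_{l,s}$ and $\int_\bt\d_k(w)w^{s-l}\di m(w)=\widehat{\d_k}(l-s)$ respectively. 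Collecting the pieces yields the claimed formula. The delicate step is the simplification of $\D_\om^{-1/2}\eeps^{kl}$: without the combined use of \eqref{tomis} and the cocycle relation, one is left with intractable products of half-powers of $\d_n$, whereas the identity above makes every subsequent integral explicitly computable on $\bt$.
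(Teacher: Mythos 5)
Your proposal is correct and follows essentially the same route as the paper: direct componentwise evaluation on the basis vectors, reduction of the third case to $\D_\om^{-1/2}\eeps^{kl}=u_{kl}^*\xi_\om$ with $\big(u_{kl}^*\xi_\om\big)_n(z)=\mathpzc{f}^{-k}(z)^{-l}\d_{n,-k}$, and the change of variables $w=\mathpzc{f}^{-k}(z)$ governed by the Radon--Nikodym cocycle. The only cosmetic differences are that you make the integration by parts in the second identity and the cocycle identity explicit, where the paper simply says "follows analogously."
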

\begin{proof}
For the first equality, we get
\begin{align*}
\big\langle\D_\om^{-1}\mathrm{L}\xi_{kl},\xi_{rs}\big\rangle=&\sum_{n\in\bz}(\imath l-a_n)\d_{k,n}\d_{r,n}\oint\frac{\di z}{2\pi\imath z}1/\d_n(z)z^{-(s-l)}\\
=&(\imath l-a_k)\widehat{1/\d_k}(s-l)\d_{k,r}.
\end{align*}
The second one follows analogously.

The third equality follows after some computations. We first note that
$$
\big\langle\D_\om^{-1/2}\mathrm{L}\D_\om^{-1/2}\eeps^{kl},\eeps^{rs}\big\rangle=\langle\mathrm{L}u^*_{kl}\xi_\om,u^*_{rs}\xi_\om\rangle.
$$
Secondly, after some computations we get
$$
\big(u^*_{kl}\xi_\om\big)_n(z)=\mathpzc{f}^n(z)^{-l}\d_{n,-k},\quad n\in\bz,\,z\in\bt.
$$
We consider the elementary changes of variables
$$
w:=\mathpzc{f}^{-k}(z),\quad k\in\bz.
$$
For the first addendum of $\mathrm{L}$, we get
\begin{align*}
\bigg\langle\bigg(\bigoplus_\bz\imath z\frac{\di\,\,}{\di z}\bigg)u^*_{kl}\xi_\om,u^*_{rs}\xi_\om&\bigg\rangle
=\imath\d_{k,r}\oint z\frac{\di\,\,}{\di z}\big(\mathpzc{f}^{-k}(z)^{-l}\big)\mathpzc{f}^{-k}(z)^{s}\frac{\di z}{2\pi\imath z}\\
=&-\imath l\d_{k,r}\oint w^{-(l-s)}\frac{\di w}{2\pi\imath w}=-\imath l\d_{k,r}\d_{l,s}.
\end{align*}
For the second addendum, after the changes of variables as above and \eqref{rcanz}, we get
\begin{align*}
&\bigg\langle\bigg(\bigoplus_{n\in\bz}a_n I\bigg)u^*_{kl}\xi_\om,u^*_{rs}\xi_\om\bigg\rangle
=a_{-k}\d_{k,r}\oint\mathpzc{f}^{-k}(z)^{-(l-s)}\frac{\di z}{2\pi\imath z}\\
=&a_{-k}\d_{k,r}\oint w^{-(l-s)}\frac{w}{\mathpzc{f}^{k}(w)(D\mathpzc{f}^{-k})(\mathpzc{f}^{k}(w))}
\frac{\di w}{2\pi\imath w}\\
=&a_{-k}\d_{k,r}\oint w^{-(l-s)}\frac{wD\mathpzc{f}^{k}(w)}{\mathpzc{f}^{k}(w)}
\frac{\di w}{2\pi\imath w}
=a_{-k}\d_{k,r}\widehat{\d_k}(l-s).
\end{align*}
Collecting together, we obtain the assertion.
\end{proof}

\section*{Acknowledgements}

The author acknowledges the financial support of Italian INDAM-GNAMPA.
The present project is part of "MIUR Excellence Department Project awarded to the Department of Mathematics, University of Rome Tor Vergata, CUP E83C18000100006".

He is grateful to the referees for a very careful reading of the manuscript, and for several suggestions which contribute to improve the presentation of the paper.

\end{document}